\def\dist{{\rm dist}}
\def\ds{\displaystyle}
\newtheorem{thm}{Theorem}
\newtheorem{lem}{Lemma}
\newtheorem{ex}{Example}
\newtheorem{prop}{Proposition}
\newtheorem{defn}{Definition}
\newtheorem{cor}{Corollary}
\begin{document}
	\title{On Necessary and Sufficient Conditions for Fixed Point Convergence: A Contractive Iteration Principle}
	\author{Vasil Zhelinski}
	\email{vasil\_zhelinski@uni-plovdiv.bg, v.zhelinski@gmail.com}
	\maketitle
	
	\section*{Abstract}
		
		While numerous extensions of Banach's fixed point theorem typically offer only sufficient conditions for the existence and uniqueness of a fixed point and the convergence of iterative sequences, this study introduces a generalization grounded in the iterative contraction principle in complete metric spaces. This generalization establishes both the necessary and sufficient conditions for the existence of a unique fixed point to which all iterative sequences converge, along with an accurate error estimate. Furthermore, we present and prove an additional theorem that characterizes the convergence of all iterative sequences to fixed points that may not be unique. Several examples are provided to illustrate the practical application of these results, including a case where the traditional and well-known generalizations of Banach's theorem, such as those by Banach, Kannan, Chatterjea, Hardy-Rogers, Meir-Keeler, and Guseman, are inapplicable.
		
		\noindent Primary 47H10; Secondary 54H25, 37C25, 65J15, 47H09.\\
		\keywords{contraction map, fixed point, contractive iterate, necessary and sufficient condition}
	\section{Introduction and Preliminaries}
	
		The concept of fixed points is crucial for addressing equations of the form $Tx=x$, where $T:X\to X$ represents a self-map on a set $X$, typically a subset of a metric or normed space. The fixed-point theory is rapidly expanding in the field of mathematics. Numerous researchers are engaged in this area because fixed-point theory finds applications in various disciplines, including mathematics, biology, engineering, economics, computer science, and machine learning. Problems such as variational inclusion, minimization, equilibrium, variational inequality, and optimization can often be transformed into fixed point problems. A notable area in fixed-point theory is the examination of contraction maps.
	
		Instances of the application of results concerning contraction maps can be found in different scientific fields, such as \cite{ikonomika} in economics, \cite{fizika} in physics, and \cite{ai} in machine learning.
	
		The exploration of contraction maps begins with Banach's fixed-point theorem \cite{BANACH}. This theorem asserts that if $(X,\rho)$ is a complete metric space and $T: X\to X$ is a mapping for which there exists a constant $k\in[0;1)$ such that for all $x,y\in X$, the inequality $\rho(Tx,Ty)\leq k \rho(x,y)$ is satisfied, then there is a unique fixed point of $T$ in $X$. Moreover, the sequences generated by iterating $T$ (denoted as $\{T^nx\}_{n=0}^\infty$), starting from an initial point $x\in X$, converge to this fixed point. Numerous extensions of this fundamental theorem have been developed over time. These include the examination of various related spaces, such as Banach spaces and b-metric spaces introduced in \cite{B-MS-INTRODUCED} and briefly analyzed in the context of contraction maps in \cite{B-MS-1,B-MS-2,B-MS-3}, partial metric spaces introduced in \cite{PMS-INTRODUCTED} and briefly analyzed in the context of contraction maps in \cite{PMS-1,PMS-2,PMS-3}, and modular function spaces, initially utilized in \cite{KKR} and subsequently studied in \cite{Ilchev20172873,Khamsi-Kozlowski,Kozlowski,ZLATANOV-m}, among others. Another approach to generalizing Banach's theorem involves altering the contraction condition. Examples of such generalizations include the contraction maps of Kannan \cite{KANAN}, Chatterjea \cite{CHATTERJEA}, Hardy-Rogers \cite{HARDY-ROGERS-0,HARDY-ROGERS}, Meir Keeler \cite{MK}, and others. The original contraction condition of Hardy-Rogers \cite{HARDY-ROGERS-0} necessitates the existence of $0\leq k_i<1$ for $i\in\{1,2,3,4,5\}$, such that $\ds \sum_{i=1}^5 k_i<1$, and requires that every $x$ and $y$ in the considered metric spaces satisfy this condition.
		
		$$\rho(Tx,Ty)\leq k_1\rho(x,y)+k_2\rho(x,Tx)+k_3\rho(y,Ty)+k_4\rho(x,Ty)+k_5\rho(y,Tx),$$
		
		where $T$ is a self-map. Nonetheless, an equivalent contraction condition with fewer parameters was later identified in \cite{HARDY-ROGERS}, as demonstrated in the following theorem.
		\begin{thm}[\cite{HARDY-ROGERS}]\label{HR-thm}
			Consider a complete metric space $(X,\rho)$ and a mapping $T:X\to X$. Suppose there exist constants $0\leq k_i<1$ for $i\in \{1,2,3\}$ such that $k_1+k_2+k_3<1$, and the inequality
            \begin{equation}\label{HR-contraction}
            \rho(Tx,Ty)\leq k_1\rho(x,y)+\frac{k_2}{2}(\rho(x,Tx)+\rho(y,Ty))+\frac{k_3}{2}(\rho(x,Ty)+\rho(y,Tx)),
            \end{equation}
            holds for all $x,y\in X$.
            Then, $T$ possesses a unique fixed point within $X$, and for any $x\in X$, the sequence $\{T^nx\}_{n=1}^\infty$ converges to the fixed point of $T$.
		\end{thm}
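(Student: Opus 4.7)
The plan is to follow the Banach template: fix an initial point $x_0\in X$, form the orbit $x_n=T^n x_0$, show it is Cauchy, identify its limit as a fixed point, and finally prove uniqueness. Each step will use the contractive inequality \eqref{HR-contraction} with a cleverly chosen pair $(x,y)$.

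First I would prove that consecutive iterates contract geometrically. Applying \eqref{HR-contraction} with $x=x_{n-1}$ and $y=x_n$ gives
\[
\rho(x_n,x_{n+1})\leq k_1\rho(x_{n-1},x_n)+\tfrac{k_2}{2}\bigl(\rho(x_{n-1},x_n)+\rho(x_n,x_{n+1})\bigr)+\tfrac{k_3}{2}\rho(x_{n-1},x_{n+1}),
\]
since $\rho(x_n,Tx_n)=\rho(x_n,x_{n+1})$ and $\rho(x_n,Tx_{n-1})=0$. Using the triangle inequality $\rho(x_{n-1},x_{n+1})\leq \rho(x_{n-1},x_n)+\rho(x_n,x_{n+1})$ and collecting like terms yields $\rho(x_n,x_{n+1})\leq \lambda\rho(x_{n-1},x_n)$ with $\lambda=\frac{2k_1+k_2+k_3}{2-k_2-k_3}$. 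The hypothesis $k_1+k_2+k_3<1$ is exactly what is needed to conclude $\lambda<1$, and this is the single place where the full strength of the assumption on the $k_i$ is used.

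Iterating gives $\rho(x_n,x_{n+1})\leq \lambda^n \rho(x_0,x_1)$, hence by a standard geometric-series triangle argument $\{x_n\}$ is Cauchy; by completeness it converges to some $p\in X$. To show $Tp=p$, I would apply \eqref{HR-contraction} with $x=p$ and $y=x_n$ and pass to the limit $n\to\infty$, using $x_n\to p$ and $x_{n+1}\to p$; the $k_1$ and the term $\rho(x_n,x_{n+1})$ vanish, while the remaining terms leave the inequality $\bigl(1-\tfrac{k_2+k_3}{2}\bigr)\rho(p,Tp)\leq 0$. Since $k_2+k_3<1$, the coefficient is strictly positive and $\rho(p,Tp)=0$. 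For uniqueness, if $q$ were another fixed point, \eqref{HR-contraction} applied at $(p,q)$ collapses (both self-distances vanish) to $\rho(p,q)\leq (k_1+k_3)\rho(p,q)$, forcing $p=q$ because $k_1+k_3<1$.

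The main obstacle is essentially algebraic rather than conceptual: one must carefully arrange the symmetric Hardy--Rogers terms (specifically the cross term $\rho(x_{n-1},Tx_n)=\rho(x_{n-1},x_{n+1})$) so that after applying the triangle inequality and moving $\rho(x_n,x_{n+1})$ to the left-hand side, the resulting ratio $\lambda$ is visibly less than $1$ under the stated parameter condition. All other steps are routine once this contraction factor is in hand.
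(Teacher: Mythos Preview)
Your argument is correct and is essentially the standard Hardy--Rogers proof; the contraction constant $\lambda=\dfrac{2k_1+k_2+k_3}{2-k_2-k_3}$, the limit step, and the uniqueness step are all handled cleanly. There is, however, nothing in the paper to compare against: Theorem~\ref{HR-thm} is quoted from \cite{HARDY-ROGERS} as background in the introduction and the paper gives no proof of it, so your write-up stands on its own rather than as a variant of anything the author does.
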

		
		It is widely recognized that a map classified as a contraction according to Banach is also considered a contraction in the Hardy–Rogers sense with $k_{2}=k_{3}=0$. In a similar vein, if a map is a contraction as per Kannan's definition, it qualifies as a contraction in the Hardy–Rogers framework with $k_{1}=k_{3}=0$. Likewise, if a map is a contraction in the Chatterjea sense, it is a contraction in the Hardy–Rogers sense with $k_{1}=k_{2}=0$.
        
        The Meir-Keeler fixed point theorem is stated as follows:</div>
		
		\begin{thm}[\cite{MK}]\label{meir-keeler-thm}
			Consider $(X,\rho)$ as a complete metric space and let $T:X\to X$. For each $\varepsilon>0$, there exists a $\delta>0$ such that 
            \begin{equation}\label{meir-keeler-contraction}
                \text{ if }\varepsilon\leq \rho(x,y)<\varepsilon+\delta \text{ then it follows that }\rho(T(x),T(y))<\varepsilon
            \end{equation}
            for all $x,y\in X$. Consequently, $T$ possesses a unique fixed point in $X$, and the sequence $\{T^nx\}_{n=1}^\infty$ converges to this fixed point for all $x\in X$.
		\end{thm}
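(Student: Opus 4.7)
My plan is to extract strict contractivity from the Meir--Keeler condition, deduce uniqueness and continuity of $T$ immediately, show that consecutive-iterate distances decay to zero along any orbit, and then close the argument with an invariant-ball trick establishing that every orbit is Cauchy.

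First, for $x \neq y$, plugging $\varepsilon = \rho(x,y)$ into \eqref{meir-keeler-contraction} and choosing the associated $\delta > 0$ places $\rho(x,y)$ in $[\varepsilon, \varepsilon + \delta)$, so $\rho(Tx, Ty) < \rho(x,y)$. This strict contractivity forces uniqueness of any fixed point (two distinct fixed points would give $\rho(x,y) = \rho(Tx,Ty) < \rho(x,y)$) and makes $T$ nonexpansive, hence continuous. Next, fix $x \in X$ and set $x_n = T^n x$, $c_n = \rho(x_n, x_{n+1})$. Strict contractivity makes $\{c_n\}$ nonincreasing (unless some $x_n$ is already a fixed point, in which case the conclusion is immediate), so $c_n \downarrow c \geq 0$. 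If $c > 0$, then Meir--Keeler with $\varepsilon = c$ yields a $\delta$ for which some $N$ satisfies $c \leq c_N < c + \delta$, forcing $c_{N+1} < c$, a contradiction; hence $c_n \to 0$.

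The main obstacle is verifying that $\{x_n\}$ is Cauchy, since Meir--Keeler offers no uniform contraction constant. Given $\eta > 0$, I would apply \eqref{meir-keeler-contraction} with $\varepsilon = \eta/2$ to obtain $\delta > 0$, and shrinking if necessary assume $\delta \leq \eta/2$. Choose $N_0$ with $c_n < \delta/2$ for all $n \geq N_0$, and consider the closed ball $B = \{y \in X : \rho(x_{N_0}, y) \leq \eta/2 + \delta/2\}$. I would verify $T(B) \subseteq B$ by a two-case analysis on $y \in B$: if $\rho(x_{N_0}, y) < \eta/2$, strict contractivity gives $\rho(x_{N_0+1}, Ty) < \eta/2$; otherwise $\eta/2 \leq \rho(x_{N_0}, y) \leq \eta/2 + \delta/2 < \eta/2 + \delta$, and \eqref{meir-keeler-contraction} delivers the same bound. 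In either case the triangle inequality together with $c_{N_0} < \delta/2$ yields $\rho(x_{N_0}, Ty) < \eta/2 + \delta/2$, so $Ty \in B$. Induction then places $x_n \in B$ for all $n \geq N_0$, so $\rho(x_m, x_n) \leq \eta + \delta \leq 3\eta/2$ for $m, n \geq N_0$, and arbitrariness of $\eta$ makes $\{x_n\}$ Cauchy.

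Finally, completeness yields $x_n \to x^* \in X$; continuity of $T$ gives $Tx^* = \lim Tx_n = \lim x_{n+1} = x^*$, so $x^*$ is the sought fixed point. For any other starting point the same Cauchy argument produces a limit, which must coincide with $x^*$ by the already-established uniqueness.
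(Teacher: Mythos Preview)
Your argument is correct and is essentially the classical proof of the Meir--Keeler theorem: extract strict contractivity, show consecutive distances decrease to zero, and use an invariant-ball argument to obtain the Cauchy property. There is nothing to compare against here, however: the paper does not supply its own proof of this statement. Theorem~\ref{meir-keeler-thm} is quoted from \cite{MK} as a preliminary result in the introduction, solely to set up Example~2, where the authors show that the Meir--Keeler contraction condition \eqref{meir-keeler-contraction} fails for their map $T$ on $\ell_1$. So your proof stands on its own, but the paper offers no alternative route to contrast it with.
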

		
		The concept of a fixed point can be extended in various ways. For instance, coupled fixed points, as described in \cite{COUPLEDFP}, are defined for any set $X$ and a two-valued map $T:X\times X\to X$ as $x,y\in X$ such that $T(x,y)=x$ and $T(y,x)=y$. Similarly, best proximity points, introduced in \cite{BPP}, are defined for a metric space $(X,\rho)$ and a map $T:X\to X$ as $x\in X$ such that $\rho(x,Tx)=\min{\rho(u,Tu):u\in X}$. These extensions of the fixed-point concept lead to analogous extensions of Banach's fixed-point theorem. Some extensions of the results for coupled fixed points can be found in \cite{CFP-1,CFP-2,CFP-3}. The extensions that yield results for the best proximity points are linked to a particular type of mapping known as $p$-cyclic maps, introduced in \cite{PCYCLIC}. These are defined as mappings $T$ with a domain that is a union of a finite sequence of $p$ sets $\{A_n\}_{n=1}^{p}$, where each set is mapped into the next, and the last set is mapped into the first. Formally, this is expressed as $\ds T:\bigcup_{n=1}^{p}A_n\to \bigcup_{n=1}^{p}A_n$ with $T(A_n)\subseteq A_{n+1}$ for $n\in\{1,2,3,\dots, p-1\}$ and $T(A_p)\subseteq A_1$. The results concerning the best proximity points of $p$-cyclic maps are detailed in \cite{P-CYCLIC-1,P-CYCLIC-2,P-CYCLIC-3}.
		
		By integrating multiple approaches to generalize Banach's theorem, one can also achieve valid extensions.
		
		A recent extension of Banach's theorem involves a theorem in which the conditions of a contractive iterate nature are applicable. The exploration of this contractive condition type starts with \cite{CI-Introduction-1}, in which Bryant establishes and demonstrates the conjunction theorem as follows:
		\begin{thm}[\cite{CI-Introduction-1}]
			Consider a complete metric space $(X,\rho)$ and a self-map $T:X\to X$. Suppose there exists a constant $k\in [0;1)$ and an integer $n\in \{1,2,3,\dots\}$ such that for all $u,v\in X$, the following inequality holds:
            \begin{equation}\label{in-e-1}
            \rho(T^nu,T^nv)\leq k \rho(u,v).
            \end{equation}
            Under these conditions, $T$ has a single fixed point in $X$, and the sequences generated by iterating $T$ converge to this fixed point from any starting point in $X$.
		\end{thm}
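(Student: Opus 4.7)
The plan is to reduce to Banach's fixed point theorem applied to the iterate $S:=T^n$, and then lift the conclusion from $S$ back to $T$. Since inequality \eqref{in-e-1} says exactly that $S$ is a Banach contraction with constant $k$ on the complete metric space $(X,\rho)$, Banach's theorem immediately delivers a unique point $x^*\in X$ with $Sx^*=x^*$ and the convergence $S^m y\to x^*$ for every $y\in X$, together with the usual geometric error estimate. This first step is routine and sets up everything that follows.

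Next I would promote $x^*$ to a fixed point of $T$. Applying $T$ to $T^nx^*=x^*$ yields $T^n(Tx^*)=Tx^*$, so $Tx^*$ is also a fixed point of $S$; by uniqueness of the fixed point of $S$ one gets $Tx^*=x^*$. Uniqueness of the fixed point of $T$ is even easier: any $y$ with $Ty=y$ satisfies $T^ny=y$, hence $y=x^*$ again by uniqueness for $S$. These two observations use only that $T$ and $T^n$ commute and that the fixed point of $S$ is unique.

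The one step that needs a small argument is the convergence of the full sequence $\{T^mx\}_{m=0}^\infty$, not merely of the subsequence along multiples of $n$. For this I would write an arbitrary index as $m=qn+r$ with $r\in\{0,1,\dots,n-1\}$ and observe that
\begin{equation*}
T^mx=T^{qn+r}x=S^q(T^rx)\longrightarrow x^*\quad\text{as }q\to\infty,
\end{equation*}
by applying the convergence part of Banach's theorem for $S$ to each of the $n$ starting points $T^rx$, $r=0,1,\dots,n-1$. Since the original sequence is the finite union of these $n$ subsequences and each of them tends to the same limit $x^*$, the whole sequence converges to $x^*$. A quantitative error estimate of the form $\rho(T^mx,x^*)\le C k^{\lfloor m/n\rfloor}$, with $C=\max_{0\le r<n}\rho(T^rx,x^*)/(1-k)$-type bounds obtained from Banach's estimate applied to $S$ with starting point $T^rx$, can be recorded at this point.

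I do not expect a genuine obstacle: the only subtlety is the bookkeeping of the residues $r$ modulo $n$ in the last step, and the mild point that the estimate one naturally writes down is geometric in $\lfloor m/n\rfloor$ rather than in $m$. Everything else is a direct transfer from Banach's theorem for $S=T^n$ back to $T$ via the commutation $T\circ T^n=T^n\circ T$.
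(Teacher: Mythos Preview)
Your argument is correct and is in fact the standard proof of Bryant's theorem: apply Banach's contraction principle to $S=T^{n}$, transfer the fixed point to $T$ via the commutation $T\circ S=S\circ T$ and the uniqueness of the fixed point of $S$, and recover convergence of the full orbit by splitting indices into residue classes modulo $n$.

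As for comparison with the paper: there is nothing to compare. The paper does not supply its own proof of this theorem; it merely quotes it in the introduction as a known result from \cite{CI-Introduction-1}. So your write-up is not competing with any argument in the text, and the approach you chose is exactly the classical one.
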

		
		In \cite{CI-Introduction-2}, Sehgal later derived the same corollaries by ensuring that $n=n(u)$ is unique for each $u$, while also requiring that the constraint $T$ remains continuous. The theorem presented is as follows:
		\begin{thm}[\cite{CI-Introduction-2}]\label{CI-thm}
			Consider a complete metric space $(X,\rho)$ and a continuous self-map $T:X\to X$. Suppose there exists a constant $k\in [0;1)$ such that for any $u,v\in X$, the inequality $\rho(T^{n(u)}u,T^{n(u)}v)\leq k \rho(u,v)$ holds, where $n(u)\in \{1,2,3,\dots\}$ is determined solely by $u$.
            
            In this scenario, $T$ has a single fixed point in $X$, and the sequences generated by iterating $T$ converge to this fixed point, regardless of the starting point in $X$.
		\end{thm}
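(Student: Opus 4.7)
The plan is to reduce the theorem to an iterated-contraction argument for a carefully chosen subsequence of the Picard orbit, then lift convergence to the full orbit using continuity of $T$. Fix $x_0\in X$, set $y_0=x_0$, and define recursively $y_{i+1}=T^{n(y_i)}y_i$; writing $m_i=n(y_i)$ and $S_i=m_0+m_1+\cdots+m_{i-1}$, one has $y_i=T^{S_i}x_0$, so $\{y_i\}$ is a subsequence of the orbit $\{T^p x_0\}_{p=0}^\infty$.

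The central quantity to control is the orbit radius $R(u):=\sup_{\ell\geq 1}\rho(u,T^{\ell}u)$. First I would establish the monotonicity $R(T^{n(u)}u)\leq k\,R(u)$: the identity $\rho(T^{n(u)}u,T^{n(u)+\ell}u)=\rho(T^{n(u)}u,T^{n(u)}(T^{\ell}u))\leq k\,\rho(u,T^{\ell}u)$ followed by taking the supremum over $\ell\geq 1$ gives this, and iterating yields $R(y_i)\leq k^i R(y_0)$. Then I need $R(x_0)<\infty$; with $n_0=n(x_0)$ and $d_p=\rho(x_0,T^p x_0)$, the same trick produces $d_p\leq d_{n_0}+k\,d_{p-n_0}$ for $p\geq n_0$, which, written with $p=qn_0+r$ and unrolled $q$ times, becomes $d_p\leq d_{n_0}(1-k^q)/(1-k)+k^q d_r\leq d_{n_0}/(1-k)+A$, where $A=\max_{0\leq r<n_0}\rho(x_0,T^r x_0)$; hence $R(x_0)<\infty$.

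With boundedness in hand, the Cauchy property of $\{y_i\}$ is immediate: for $i<j$ we have $y_j=T^{S_j-S_i}y_i$, so $\rho(y_i,y_j)\leq R(y_i)\leq k^i R(y_0)\to 0$. By completeness set $y^*=\lim_i y_i$. To extend convergence to the whole orbit, for each $p\geq 0$ take the largest $i$ with $S_i\leq p$; then $T^p x_0=T^{p-S_i}y_i$ with $0\leq p-S_i<m_i$, so $\rho(T^p x_0,y_i)\leq R(y_i)$. Since $S_i\geq i$, as $p\to\infty$ also $i\to\infty$, whence $T^p x_0\to y^*$. Continuity of $T$ now gives $Ty^*=\lim_p T^{p+1}x_0=y^*$, so $y^*$ is a fixed point.

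Uniqueness is the easy step: if $y^*$ and $z^*$ are both fixed points, the hypothesis with $u=y^*$ yields $\rho(y^*,z^*)=\rho(T^{n(y^*)}y^*,T^{n(y^*)}z^*)\leq k\,\rho(y^*,z^*)$, forcing $y^*=z^*$; since every orbit converges to a fixed point, all orbits converge to this common one. The step I expect to be the main obstacle is establishing $R(x_0)<\infty$, because the exponent $n(u)$ depends on $u$ and the natural telescoping along the orbit is unavailable; the unrolling modulo $n_0$ is precisely what resolves it, and once it is settled the remaining arguments are routine.
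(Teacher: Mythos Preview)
The paper does not supply its own proof of this theorem: it is quoted as Sehgal's classical result \cite{CI-Introduction-2} in the preliminaries section and is used only as background for comparison with the authors' new notion. So there is nothing in the paper to compare your argument against directly.

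That said, your proof is correct. The key steps --- defining the orbit radius $R(u)=\sup_{\ell\ge1}\rho(u,T^{\ell}u)$, obtaining $R(T^{n(u)}u)\le kR(u)$ from the hypothesis applied with $v=T^{\ell}u$, bounding $R(x_0)$ by the recursion $d_p\le d_{n_0}+k\,d_{p-n_0}$ unrolled modulo $n_0$, deducing that the subsequence $\{y_i\}$ is Cauchy with $\rho(y_i,y_j)\le k^iR(x_0)$, and then squeezing the full orbit between consecutive $y_i$'s --- are all sound. Continuity of $T$ is used exactly where it should be, to pass from $T^p x_0\to y^*$ to $Ty^*=y^*$, and the uniqueness argument is the standard one.

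It is worth noting that, although the paper does not prove Sehgal's theorem, the machinery it develops for its own main result (Lemma~\ref{l1}) is essentially the same as your argument: the paper also establishes boundedness of the orbit via the recursion $\rho(x,T^{nN_x^x+m}x)\le k\rho(x,T^{(n-1)N_x^x+m}x)+\rho(T^{N_x^x}x,x)$ unrolled $n$ times, and then contracts the tail radius by a factor $k$ at each step $p_n\to p_{n+1}$. Your proof could thus be read as a specialization of the paper's Lemma~\ref{l1} to the situation where the iterative contraction exponent is allowed to depend only on the first argument.
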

		
		Shortly after, in \cite{CI-Introduction-3}, Guseman eliminated the need for $T$ to be continuous. Recent research in this area includes \cite{CI-1,CI-2,CI-3,CI-4}.
		
		It is important to note that the generalizations of Banach's theorem mentioned so far offer a condition that is sufficient but not necessary for ensuring a unique fixed point on a map and the convergence of all iterative sequences to it. In this study, we extend Banach's theorem by utilizing the iterative contraction principle, which establishes a necessary and sufficient condition for the presence of a unique fixed point to which all iterative sequences converge, and we also determine the error margin. Additionally, we develop and demonstrate a theorem that provides a necessary and sufficient condition for the convergence of all iterative sequences to fixed points, which may not be unique. Finally, we present our findings through four examples: one in which the fixed point is not immediately apparent and we calculate the error, another in which the most common generalizations of Banach's fixed point theorem are inapplicable, a third with multiple fixed points, and a fourth in which we demonstrate how our results can be used to rule out convergence to a fixed point.
		
		In this study, we adopt the following notation:  
		$\mathbb{Z}$ denotes the set of integers,  
		$\mathbb{N}$ stands for the set of positive natural numbers, and $\mathbb{N}_0$ refers to $\mathbb{N}$ together with zero.  
		The set of real numbers is denoted as $\mathbb{R}$.  
		
		For $x \in \mathbb{R}$, the symbol $\lfloor x \rfloor$ represents the greatest integer not exceeding $x$ (the floor of $x$), while $\lceil x \rceil$ denotes the least integer not smaller than $x$ (the ceiling of $x$).  
		
		A pair $(X,\rho)$ is understood as a metric space with metric $\rho$, and $(X,\|\cdot\|)$ indicates a normed space with norm $\|\cdot\|$.  
		
		Throughout the paper, we shall also use the notation $\ell_{1}$ for the classical Banach space of absolutely summable sequences.  
		More precisely, $\ell_{1}$ has the canonical basis $\{e_k\}_{k=1}^\infty$, where  
		\[
		e_k = (\underbrace{0,0,\dots,0,1}_{k},0,\dots).
		\]
		The space $\ell_{1}$ can be described as
		\[
		\ell_{1} = \left\{ \sum_{k=1}^\infty x_k e_k : \{x_{k}\}_{k=1}^\infty \subset \mathbb{R},\; \sum_{k=1}^{\infty} |x_{k}| < \infty \right\},
		\]
		equipped with the norm
		\[
		\left\|\sum_{k=1}^\infty x_k e_k\right\|_{1} = \sum_{k=1}^{\infty} |x_{k}|.
		\]
		In this form, $\ell_{1}$ is a normed linear space that is complete with respect to $\|\cdot\|_{1}$.
		
	\section{Main Result}	
	
	\begin{defn}
		Consider a metric space $(X,\rho)$ and a mapping $T:X\to X$. Let $k$ be a value in the interval $[0,1)$. The following conditions are satisfied:
        
        \begin{itemize}
        \item For any distinct points $x,y\in X$, there exists an integer $N_y^x\in \mathbb{N}$ such that 
        \begin{equation}\label{contraction1}
        \rho(T^{N_y^x}x,T^{N_y^x}y)\leq k\rho(x,y).
        \end{equation}
        \item For each $x\in X$, there is an integer $N_x^x\in \mathbb{N}$ such that \begin{equation}\label{contraction2} 
        \rho(T^{N_x^x}x,T^{N_x^x+n}x)\leq k\rho(x,T^nx),
        \end{equation}
        for every $n\in \mathbb{N}$.
        \end{itemize}
        In this case, $T$ is a universal iterative contraction map.
	\end{defn}
	
	\begin{thm}\label{t1}
		Consider a complete metric space $(X,\rho)$, a constant $k$ within the interval $(0;1)$, and a mapping $T:X\to X$. The following two statements are equivalent,
		\begin{enumerate}[label=(stat-\arabic*)]
			\item\label{st-1} $T$ is a universal iterative contraction map, with $k$.
			\item\label{st-2} There exists an $\alpha\in X$, which serves as the sole fixed point of $T$ within $X$, and for every $x\in X$, the limit $\ds\lim_{n\to\infty}T^nx=\alpha$ holds.
		\end{enumerate}
	\end{thm}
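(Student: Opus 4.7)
The plan is to prove each implication separately, starting with the easier direction. For \ref{st-2} $\Rightarrow$ \ref{st-1}, I would fix any $k \in (0,1)$ and verify the two defining conditions. Condition \eqref{contraction1} is immediate from the Cauchyness of the convergent orbits: since $T^n x$ and $T^n y$ both tend to $\alpha$, one has $\rho(T^N x, T^N y) \to 0$, so every sufficiently large $N$ works. For condition \eqref{contraction2}, the nontrivial case is $x \neq \alpha$. I would first observe that $\rho(x, T^n x)$ cannot vanish for any $n \geq 1$ --- otherwise $x$ would be a periodic point and its orbit could not converge to $\alpha \neq x$. Since $\rho(x, T^n x) \to \rho(x,\alpha) > 0$, the positive sequence $\{\rho(x, T^n x)\}$ admits a strictly positive infimum $c$, and Cauchyness of $\{T^m x\}$ then lets me pick $N$ so large that $\rho(T^N x, T^{N+n} x) < kc$ uniformly in $n$, securing \eqref{contraction2}.

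For the harder direction \ref{st-1} $\Rightarrow$ \ref{st-2}, uniqueness of a fixed point is a one-liner: two distinct fixed points $\alpha,\beta$ would force $\rho(\alpha,\beta) = \rho(T^N \alpha, T^N \beta) \leq k\,\rho(\alpha,\beta)$ via \eqref{contraction1}, which is impossible. The bulk of the work is convergence. Fix $x \in X$ and set $N_0 = N_x^x$. A preparatory step is to show that $a_n := \rho(x, T^n x)$ is uniformly bounded in $n$: combining the triangle inequality with \eqref{contraction2} produces a recurrence $a_n \leq \rho(x, T^{N_0} x) + k\, a_{n-N_0}$ for $n > N_0$, which iterates to a uniform bound $B$.

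With $B$ in hand, the next step is to iterate \eqref{contraction2} along the chain of points $x$, $T^{N_0} x$, $T^{N_0+N_1} x$, and so on, where each $N_j$ is supplied by the definition at the corresponding point. This produces an increasing sequence $M_j \to \infty$ with
\[
\rho(T^{M_j} x, T^{M_j + n} x) \leq k^j B \qquad \text{for every } n \in \mathbb{N},
\]
forcing $\{T^m x\}$ to be Cauchy and hence convergent to some $\alpha_x$ by completeness. An entirely analogous iteration of \eqref{contraction1} applied to two starting points $x,y$ gives $\rho(T^{M_j'} x, T^{M_j'} y) \leq k^j \rho(x,y) \to 0$ (unless the two orbits coincide at some finite stage), showing that $\alpha_x$ does not depend on $x$; denote the common limit by $\alpha$.

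The final and most delicate step, which I expect to be the main obstacle because no continuity of $T$ is assumed, is proving $T\alpha = \alpha$. The decisive trick is to apply \eqref{contraction2} at $x = \alpha$: there is some $N_\alpha$ with $\rho(T^{N_\alpha}\alpha, T^{N_\alpha + n}\alpha) \leq k\,\rho(\alpha, T^n \alpha)$ for every $n$. Letting $n \to \infty$ and invoking $T^n\alpha \to \alpha$ (a special instance of the convergence of orbits just established) squeezes both sides to zero, forcing $T^{N_\alpha}\alpha = \alpha$. Thus the orbit of $\alpha$ cycles through $\alpha, T\alpha, \ldots, T^{N_\alpha-1}\alpha$ and back to $\alpha$, while at the same time converging to $\alpha$; this forces every element of the cycle to coincide with $\alpha$, and in particular $T\alpha = \alpha$, completing the proof.
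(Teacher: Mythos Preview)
Your proposal is correct and follows essentially the same route as the paper: boundedness of orbits via iterating \eqref{contraction2}, Cauchyness along the chain $p_0,p_1,\dots$, coincidence of limits via iterating \eqref{contraction1}, and the fixed-point step by applying \eqref{contraction2} at $\alpha$ to obtain $T^{N_\alpha}\alpha=\alpha$ and then ruling out a nontrivial cycle. The only cosmetic difference is in the reverse direction \ref{st-2}$\Rightarrow$\ref{st-1}: you split according to whether $x=\alpha$ and use the positive-infimum argument on $\{\rho(x,T^nx)\}$, whereas the paper (Lemma~\ref{l4} and Corollary~\ref{c1}) splits according to whether $y=\alpha$ and extracts a single $N_y^x$ that simultaneously delivers \eqref{contraction1} and \eqref{contraction2}; both are the same idea packaged differently.
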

	
	\begin{thm}\label{t3}
		Consider a metric space $(X,\rho)$ and a mapping $T:X\to X$. The following two statements are equivalent,
		\begin{enumerate}[label=(stat-\arabic*)]
			\item\label{t3st-1} There exists a closed cover $\mathcal{C}$ of $X$ such that for every $S\in \mathcal{C}$, the condition $T(S)\subseteq S$ is satisfied. Additionally, $(S,\rho)$ forms a complete subspace of $X$, and within $S$, the map $T$ acts as a universal iterative contraction map for a given $k\in (0,1)$.
			\item\label{t3st-2} The sequence $\{T^nx\}_{n=0}^\infty$ converges to a fixed point of $T$ in $X$ for each $x\in X$.
		\end{enumerate}
	\end{thm}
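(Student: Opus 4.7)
The plan is to prove the two implications separately. The forward direction \ref{t3st-1}$\Rightarrow$\ref{t3st-2} reduces directly to Theorem~\ref{t1}: given $x\in X$, pick $S\in\mathcal{C}$ with $x\in S$, apply Theorem~\ref{t1} inside the complete space $(S,\rho)$ to get a (unique in $S$) fixed point $\alpha_S\in S$ of $T|_S$, which is also a fixed point of $T$, and conclude $T^n x\to \alpha_S$. This direction is essentially bookkeeping.

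For the converse \ref{t3st-2}$\Rightarrow$\ref{t3st-1}, I would construct the cover from orbits. For each $x\in X$ set $O(x)=\{T^n x:n\in\mathbb{N}_0\}$ and $\alpha_x=\lim_{n\to\infty}T^n x$, which exists and is a fixed point of $T$ by hypothesis, and define
$$S_x := O(x)\cup\{\alpha_x\}, \qquad \mathcal{C}:=\{S_x:x\in X\}.$$
Then $\mathcal{C}$ covers $X$ and $T(S_x)\subseteq S_x$ are immediate.

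The next step is to check that each $S_x$ is closed in $X$ and that $(S_x,\rho)$ is complete. Both rest on the observation that $\alpha_x$ is the only possible accumulation point of $O(x)$: any point at positive distance from $\alpha_x$ is separated from all but finitely many members of $O(x)$. Hence every limit point of $S_x$ is either $\alpha_x$ or one of the finitely many ``early'' orbit elements, and every Cauchy sequence in $S_x$ is either eventually constant or contains a subsequence $T^{n_j}x$ with $n_j\to\infty$ whose limit must be $\alpha_x\in S_x$.

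The main obstacle is verifying that $T|_{S_x}$ is a universal iterative contraction on $S_x$ for some $k\in(0,1)$. Condition \eqref{contraction1} is easy: for distinct $y,z\in S_x$, both $T^n y$ and $T^n z$ converge to $\alpha_x$, so $\rho(T^N y,T^N z)\to 0$ while $\rho(y,z)>0$, and a sufficiently large $N_z^y$ suffices. Condition \eqref{contraction2} is the delicate point, because a single $N_y^y$ must work simultaneously for every $n\in\mathbb{N}$. If $y=\alpha_x$ both sides vanish; otherwise I would first argue that $T^n y\neq y$ for all $n\geq 1$ (since $T^n y=y$ forces the constant subsequence $\{T^{mn}y\}=\{y\}$ to converge to $\alpha_x$, contradicting $y\neq\alpha_x$), so $\rho(y,T^n y)>0$ for every $n$ and, combined with $\rho(y,T^n y)\to\rho(y,\alpha_x)>0$, gives $c:=\inf_{n\geq1}\rho(y,T^n y)>0$. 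Meanwhile the Cauchy property of $\{T^m y\}$ yields $\sup_{n\geq1}\rho(T^N y,T^{N+n}y)\to 0$ as $N\to\infty$, so choosing $N_y^y$ with $\sup_{n\geq1}\rho(T^{N_y^y}y,T^{N_y^y+n}y)\leq kc$ secures \eqref{contraction2} uniformly in $n$ and closes the argument.
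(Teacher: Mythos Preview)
Your proposal is correct and follows essentially the same route as the paper: both directions use the orbit closure $S_x=\{T^n x\}_{n\ge 0}\cup\{\alpha_x\}$ as the cover elements and reduce the forward implication to Theorem~\ref{t1}. The only cosmetic differences are that the paper obtains closedness and completeness of $S_x$ in one stroke by showing $S_x$ is compact (any open set containing $\alpha_x$ swallows a tail of the orbit), and it outsources your verification of \eqref{contraction1}--\eqref{contraction2} to Lemma~\ref{l4}, whose proof is exactly the ``$\inf_n\rho(y,T^n y)>0$ then pick $N$ large'' argument you wrote out by hand.
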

	
	\begin{thm}\label{t2}
		Consider a complete metric space $(X,\rho)$ and a universal iterative contraction map $T$. Let $x$ be an element of $X$, and define $\ds\alpha=\lim_{n\to \infty}T^nx$. The sequence $\{p_n\}_{n=0}^\infty\in \mathbb{N}_0$ is specified by $p_0=0$ and $p_{n+1}=p_n+N_{T^{p_n}x}^{T^{p_n}x}$. For each $n\in \mathbb{N}_0$.
        
        Then, the following inequalities are satisfied: $$\rho(\alpha,T^{p_n}x)\leq k^n M_x\ {\rm and}\ \sup_{i\geq p_n}\rho(\alpha,T^ix)\leq 2k^n M_x,$$ where
		$$M_x=\max\left(\{\rho(x,T^{m}x):m\in\{1,2,\dots N_x^x-1\}\}\cup\left\{\frac{1}{1-k}\rho(T^{N_x^x}x,x)\right\}\right).$$
	\end{thm}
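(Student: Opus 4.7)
The plan is to first prove the uniform bound $\rho(x,T^m x)\leq M_x$ valid for every $m\in\mathbb{N}$, then propagate a factor $k^n$ along the stopping-time sequence $\{p_n\}$ by iterating condition~\eqref{contraction2}, and finally handle all indices $i\geq p_n$ by one application of the triangle inequality.

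Theorem~\ref{t1} already supplies the fixed point $\alpha$ and the convergence $T^m x\to\alpha$, so the whole task is quantitative. Write $f(m)=\rho(x,T^m x)$ and $N=N_x^x$. Combining the triangle inequality with \eqref{contraction2} taken at $y=x$ yields the one-step recursion
\[
f(N+m)\leq\rho(x,T^N x)+\rho(T^N x,T^{N+m}x)\leq f(N)+k\,f(m)
\]
valid for every $m\in\mathbb{N}_0$, with the convention $f(0)=0$. For $m=qN+r$ with $q\geq 1$ and $r\in\{0,1,\dots,N-1\}$, iterating this $q$ times gives
\[
f(qN+r)\leq f(N)\sum_{i=0}^{q-1}k^i+k^q f(r)=(1-k^q)\,\frac{f(N)}{1-k}+k^q f(r),
\]
which is a convex combination of $\frac{f(N)}{1-k}$ and $f(r)$. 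By the very definition of $M_x$, the first quantity is at most $M_x$; the second is at most $M_x$ for $r\in\{1,\dots,N-1\}$ and vanishes for $r=0$. Hence $f(m)\leq M_x$ for every $m\geq 1$, and the same bound also holds for $m\in\{1,\dots,N-1\}$ directly from the first entry of $M_x$.

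Next I propagate the bound down $\{p_n\}$. Plugging $y=T^{p_n}x$ into \eqref{contraction2} and using $p_{n+1}-p_n=N_{T^{p_n}x}^{T^{p_n}x}$ produces
\[
\rho(T^{p_{n+1}}x,T^{p_{n+1}+j}x)\leq k\,\rho(T^{p_n}x,T^{p_n+j}x)\qquad\text{for all }j\in\mathbb{N}.
\]
A routine induction on $n$ then gives $\rho(T^{p_n}x,T^{p_n+j}x)\leq k^n f(j)\leq k^n M_x$. Letting $j\to\infty$ and using $T^{p_n+j}x\to\alpha$ together with continuity of the metric delivers the first inequality, $\rho(\alpha,T^{p_n}x)\leq k^n M_x$. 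For the second, any $i\geq p_n$ is either $p_n$ itself (already bounded by $k^n M_x\leq 2k^n M_x$) or $p_n+j$ with $j\geq 1$, and the triangle inequality gives $\rho(\alpha,T^i x)\leq\rho(\alpha,T^{p_n}x)+\rho(T^{p_n}x,T^{p_n+j}x)\leq 2k^n M_x$.

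The one delicate point is the uniform bound $f(m)\leq M_x$: one has to notice that unfolding $f(N+m)\leq f(N)+k\,f(m)$ collapses to a convex combination of two $M_x$-controlled terms rather than to an unbounded geometric accumulation, which is precisely why the factor $\frac{1}{1-k}$ appears only multiplying $\rho(T^N x,x)$ inside the definition of $M_x$. Everything beyond this is bookkeeping.
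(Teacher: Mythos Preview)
Your argument is correct and follows essentially the same route as the paper: the paper isolates the bound $\sup_{j\geq 0}\rho(T^{p_n}x,T^{p_n+j}x)\leq k^n M_x$ as a separate lemma (Lemma~\ref{l1}), proving first $\rho(x,T^m x)\leq M_x$ via the very same recursion $f(N+m)\leq f(N)+k\,f(m)$ you write down, then iterating \eqref{contraction2} along $\{p_n\}$, and finally deducing Theorem~\ref{t2} by passing to the limit and applying the triangle inequality exactly as you do. The only cosmetic differences are that you phrase the $M_x$ bound as a convex combination and pass to the limit via continuity of the metric rather than via the closed-ball formulation; also note that \eqref{contraction2} has no variable $y$, so ``plugging $y=T^{p_n}x$'' should read ``applying \eqref{contraction2} at the point $T^{p_n}x$''.
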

	
	\section{Auxiliary Results}
	
	\begin{lem}\label{l1}
		Consider a metric space $(X,\rho)$, a universal iterative contraction map $T$, and an element $x$ belonging to $X$.
		
		Then, the sequence $\{T^nx\}_{n=0}^\infty$ is a Cauchy sequence. Specifically, for each $n\in \mathbb{N}_0$, the inequality \\ $\ds\sup_{i\geq p_n}\rho(T^{p_n}x,T^ix)\leq k^nM_x$ is satisfied, where the sequence $\{p_n\}_{n=0}^\infty\subseteq \mathbb{N}_0$ is defined by $p_0=0$, $p_{n+1}=p_n+N_{T^{p_n}x}^{T^{p_n}x}$, and $$M_x=\max\left(\{\rho(x,T^{m}x):m\in\{1,2,\dots N_x^x-1\}\}\cup\left\{\frac{1}{1-k}\rho(T^{N_x^x}x,x)\right\}\right).$$
	\end{lem}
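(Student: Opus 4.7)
The plan is to prove the displayed bound $\sup_{i\ge p_n}\rho(T^{p_n}x,T^ix)\le k^n M_x$ by induction on $n$; the Cauchy conclusion will then follow immediately from the triangle inequality, as $k^n M_x\to 0$.

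The base case $n=0$ amounts to showing $\rho(x,T^ix)\le M_x$ for every $i\in\mathbb{N}_0$, and I would establish it by a separate strong induction on $i$. For $i\in\{0,1,\dots,N_x^x-1\}$ the inequality is built into the definition of $M_x$. For $i\ge N_x^x$ I would write $i=N_x^x+j$ with $j\ge 0$, apply the triangle inequality through $T^{N_x^x}x$, and invoke condition \eqref{contraction2} to replace $\rho(T^{N_x^x}x,T^{N_x^x+j}x)$ by $k\rho(x,T^jx)$. The strong inductive hypothesis then gives $k\rho(x,T^jx)\le kM_x$, and the presence of the factor $\frac{1}{1-k}$ in the definition of $M_x$ is exactly what is needed to conclude that $\rho(x,T^{N_x^x}x)+kM_x\le M_x$, closing the induction.

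For the inductive step on $n$, I would set $y=T^{p_n}x$ and $N=N_y^y$, so that $p_{n+1}=p_n+N$. For any $i>p_{n+1}$, writing $i=p_{n+1}+j$ with $j\ge 1$ and applying condition \eqref{contraction2} at $y$ yields
$$\rho(T^{p_{n+1}}x,T^ix)=\rho(T^Ny,T^{N+j}y)\le k\rho(y,T^jy)=k\rho(T^{p_n}x,T^{p_n+j}x),$$
and since $p_n+j>p_n$, the inductive hypothesis bounds the right-hand side by $k\cdot k^n M_x=k^{n+1}M_x$. The case $i=p_{n+1}$ is trivial.

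To finish, observe that each $N_{T^{p_n}x}^{T^{p_n}x}\ge 1$, so $p_n\ge n\to\infty$, while $k^nM_x\to 0$; a triangle inequality through $T^{p_n}x$ then yields $\rho(T^ix,T^jx)\le 2k^nM_x$ for all $i,j\ge p_n$, giving the Cauchy property. The main obstacle is the base case: the crux is recognizing that the seemingly ad hoc definition of $M_x$ --- in particular the appearance of $\frac{1}{1-k}\rho(T^{N_x^x}x,x)$ rather than just $\rho(T^{N_x^x}x,x)$ --- is precisely calibrated so that the strong induction on $i$ closes at every stage. Once that algebraic observation is in hand, the remaining work is essentially bookkeeping with condition \eqref{contraction2}.
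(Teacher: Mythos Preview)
Your proof is correct and follows the paper's approach in all essentials. The only variation is cosmetic: for the base case the paper iterates the bound $\rho(x,T^{N_x^x+n}x)\le k\,\rho(x,T^nx)+\rho(T^{N_x^x}x,x)$ explicitly (decomposing the index as $qN_x^x+m$ and summing a finite geometric series), while your strong induction on $i$ packages the same computation more compactly; the inductive step on $n$ and the Cauchy conclusion match the paper's argument.
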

	\begin{proof}
		Consider $x\in X$. Utilizing (\ref{contraction2}) alongside the triangle inequality, we deduce that for any $n\in \mathbb{N}$, the following holds:
        $$\rho(x,T^{N_x^x+n}x)\leq\rho(T^{N_x^x}x,T^{N_x^x+n}x)+\rho(T^{N_x^x}x,x)\leq k\rho(x,T^nx)+\rho(T^{N_x^x}x,x).$$
        By repeatedly applying this inequality, we derive the sequence of inequalities 
        $$ 
        \begin{array}{lll}
        \rho(x,T^{nN_x^x+m}x)&\leq& k\rho(x,T^{(n-1)N_x^x+m}x)+\rho(T^{N_x^x}x,x)\\
        &\leq&k^2\rho(x,T^{(n-2)N_x^x+m}x)+(k+1)\rho(T^{N_x^x}x,x)\\ &&\dots\\
        &\leq&\ds k^{n-1}\rho(x,T^{N_x^x+m}x)+\frac{1-k^{n-1}}{1-k}\rho(T^{N_x^x}x,x)\\
        &\leq&\ds k^{n}\rho(x,T^{m}x)+\frac{1-k^{n}}{1-k}\rho(T^{N_x^x}x,x)\\
        &\leq&\ds \max\left\{\rho(x,T^{m}x),\frac{1}{1-k}\rho(T^{N_x^x}x,x)\right\},\\
        \end{array}
        $$
        is valid for any $n,m\in \mathbb{N}_0$. Consequently, for each $m\in \mathbb{N}_0$, we can express 
        $$\ds\sup_{n\in \mathbb{N}_0}\rho(x,T^{nN_x^x+m}x)\leq\max\left\{\rho(x,T^{m}x),\frac{1}{1-k}\rho(T^{N_x^x}x,x)\right\}.$$
        Then, considering $\ds \{T^{n}x\}_{n=0}^\infty=\bigcup_{m=0}^{N_x^x-1}\{T^{nN_x^x+m}x\}_{n=0}^\infty$, we conclude that there exists $\ds \sup_{n\in \mathbb{N}_0}\rho(x,T^nx)\leq M_x<\infty$.
		
		From (\ref{contraction2}), it is evident that for every $n,i\in \mathbb{N}_0$, the inequality $$\rho(T^{p_{n+1}}x,T^{p_{n+1}+i}x)=\rho(T^{N_{T^{p_{n}}x}^{T^{p_{n}}x}}T^{p_{n}}x,T^{N_{T^{p_{n}}x}^{T^{p_{n}}x}+i}T^{p_n}x)\leq k\rho(T^{p_{n}}x,T^{p_{n}+i}x)$$
        holds true. By applying this $n$ times in succession, we find that for each $n\in \mathbb{N}_0$, the following is valid: 
        $$ \begin{array}{lll} \ds\sup_{i\geq p_n}\rho(T^{p_n}x,T^ix)&=&\ds\sup_{i\in \mathbb{N}_0}\rho(T^{p_n}x,T^{p_n+i}x)\\ &\leq&\ds k\sup_{i\in \mathbb{N}_0}\rho(T^{p_{n-1}}x,T^{p_{n-1}+i}x)\\ &\leq&\ds k^2\sup_{i\in \mathbb{N}_0}\rho(T^{p_{n-2}}x,T^{p_{n-2}+i}x)\\ &&\dots\\ &\leq&\ds k^n\sup_{i\in \mathbb{N}_0}\rho(x,T^ix)\\ &\leq&\ds k^nM_x.\\ \end{array} $$
        Given that $k\in [0,1)$, it follows that for any $\varepsilon>0$, there exists an $n\in \mathbb{N}_0$ such that $\ds\sup_{i\geq n}\rho(T^{n}x,T^ix)\leq \varepsilon$. Consequently, the sequence $\{T^nx\}_{n=0}^\infty$ is a Cauchy sequence.
	\end{proof}
	\begin{lem}\label{l2}
		Consider $(X,\rho)$ as a complete metric space, with $T$ being a universal iterative contraction mapping.
		
		Then, there exists an $\alpha \in X$ such that for every $x\in X$, the limit $\ds\lim_{n\to\infty}T^nx$ equals $\alpha$.
	\end{lem}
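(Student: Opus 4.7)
\textbf{Plan of proof for Lemma \ref{l2}.} The strategy has two phases: first extract a limit for each starting point via Lemma \ref{l1} and completeness, then use the contractive condition \eqref{contraction1} to show that all these limits coincide.

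For the first phase I would invoke Lemma \ref{l1} directly: for every $x\in X$ the sequence $\{T^nx\}_{n=0}^\infty$ is Cauchy, so completeness of $(X,\rho)$ furnishes a point $\alpha_x\in X$ with $T^nx\to\alpha_x$. What remains is to prove that the map $x\mapsto\alpha_x$ is constant on $X$.

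For the second phase, fix two points $x,y\in X$ and build a sequence of common iteration indices inductively. Set $M_0=0$. As long as $T^{M_m}x\neq T^{M_m}y$, condition \eqref{contraction1} applied to the pair $(T^{M_m}x,T^{M_m}y)$ produces an integer $N_m:=N_{T^{M_m}y}^{T^{M_m}x}\in\mathbb{N}$ such that
\[
\rho\bigl(T^{N_m}T^{M_m}x,\;T^{N_m}T^{M_m}y\bigr)\leq k\,\rho\bigl(T^{M_m}x,T^{M_m}y\bigr);
\]
then define $M_{m+1}=M_m+N_m$, which satisfies $M_{m+1}>M_m$ since $N_m\geq 1$. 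Iterating this inequality gives $\rho(T^{M_m}x,T^{M_m}y)\leq k^{m}\rho(x,y)$ for every $m$ reached by the construction. If instead the construction terminates at some stage because $T^{M_m}x=T^{M_m}y$, then the two orbits coincide from index $M_m$ onward and so share the same limit, giving $\alpha_x=\alpha_y$ immediately.

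In the non-terminating case, $\{M_m\}$ is a strictly increasing sequence in $\mathbb{N}_0$, hence $\{T^{M_m}x\}$ and $\{T^{M_m}y\}$ are subsequences of $\{T^nx\}$ and $\{T^ny\}$ converging to $\alpha_x$ and $\alpha_y$ respectively. Continuity of $\rho$ then yields $\rho(\alpha_x,\alpha_y)=\lim_{m\to\infty}\rho(T^{M_m}x,T^{M_m}y)\leq \lim_{m\to\infty} k^m\rho(x,y)=0$, so $\alpha_x=\alpha_y$. Setting $\alpha:=\alpha_x$ for any (hence every) $x\in X$ completes the proof. I expect the only subtle point to be this bookkeeping with the indices $M_m$ — one must notice that $N_m\geq 1$ so that $\{M_m\}$ is strictly increasing (guaranteeing the subsequence interpretation) and that the degenerate case of equality along the orbit must be handled separately rather than by passing to the limit.
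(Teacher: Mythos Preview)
Your proof is correct and follows essentially the same route as the paper's: both invoke Lemma~\ref{l1} for convergence of each orbit and then build an increasing index sequence via repeated application of \eqref{contraction1} to force the limits to agree. You are in fact slightly more careful than the paper, since you explicitly treat the degenerate case $T^{M_m}x=T^{M_m}y$ (where \eqref{contraction1}, stated only for distinct points, would not literally apply) and you note that $N_m\geq 1$ guarantees strict monotonicity of $\{M_m\}$.
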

	\begin{proof}
		To demonstrate the assertion, it suffices to show that for any $x,y\in X$, the equation $\ds\lim_{n\to\infty}T^nx=\lim_{n\to\infty}T^ny=u\in X$ is valid.
        
        Consider $x,y\in X$ and define the sequence $\{p_n\}_{n=0}^\infty$ by setting $p_0=0$ and $p_{n+1}=p_n+N_{T^{p_n}y}^{T^{p_n}x}$. From (\ref{contraction1}), it is evident that for each $n\in \mathbb{N}_0$, the following holds: $$\rho(T^{p_{n+1}}x,T^{p_{n+1}}y)=\rho(T^{N_{T^{p_{n}}y}^{T^{p_{n}}x}}T^{p_{n}}x,T^{N_{T^{p_{n}}y}^{T^{p_{n}}x}}T^{p_n}y)\leq k\rho(T^{p_{n}}x,T^{p_{n}}y).$$ By applying this inequality consecutively $n$ times, we derive $\rho(T^{p_{n}}x,T^{p_{n}}y)\leq k^n\rho(x,y)$ for every $n\in \mathbb{N}_0$. Consequently, $\ds\lim_{n\to\infty}\rho(T^{p_n}x,T^{p_n}y)=0$. Utilizing lemma \ref{l1} and the completeness of $(X,\rho)$, we find $\ds\lim_{n\to\infty}T^nx=\lim_{n\to\infty}T^{p_n}x=x'\in X$ and $\ds\lim_{n\to\infty}T^ny=\lim_{n\to\infty}T^{p_n}y=y'\in X$. Given $\ds\lim_{n\to\infty}\rho(T^{p_n}x,T^{p_n}y)=0$ and the continuity of the metric, it follows that $x'=y'$.
	\end{proof}
	\begin{lem}\label{l3}
		Consider a complete metric space $(X,\rho)$ and a universal iterative contraction map $T$. If $x$ is an element of $X$ and $\ds\lim_{n\to\infty}T^nx=\alpha$, then $T\alpha=\alpha$.
	\end{lem}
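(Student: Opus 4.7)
The plan is to combine Lemma~\ref{l2} with a passage to the limit inside the iterative contraction condition \eqref{contraction2}. The key observation is that Lemma~\ref{l2} already guarantees $T^n\alpha\to\alpha$ (applied with starting point $\alpha$), which will allow us to let $n\to\infty$ on both sides of a single application of \eqref{contraction2} at the point $\alpha$.

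First, I would apply \eqref{contraction2} with $x=\alpha$ to obtain a fixed integer $N:=N_\alpha^\alpha\in\mathbb{N}$ such that $\rho(T^N\alpha,T^{N+n}\alpha)\le k\rho(\alpha,T^n\alpha)$ for every $n\in\mathbb{N}$. Sending $n\to\infty$, continuity of the metric together with $T^n\alpha\to\alpha$ shows that the right-hand side tends to $0$ while the left-hand side tends to $\rho(T^N\alpha,\alpha)$. This forces $T^N\alpha=\alpha$, so $\alpha$ is a fixed point of the iterate $T^N$ but not yet obviously of $T$ itself.

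To promote this to $T\alpha=\alpha$, I would extract a constant subsequence from $\{T^n\alpha\}_{n=0}^\infty$: by induction $T^N\alpha=\alpha$ gives $T^{mN}\alpha=\alpha$ for all $m\in\mathbb{N}_0$, hence $T^{mN+1}\alpha=T\alpha$ for every such $m$. Thus $\{T^{mN+1}\alpha\}_{m=0}^\infty$ is a constant subsequence of the convergent sequence $\{T^n\alpha\}_{n=0}^\infty$, and by uniqueness of the limit its constant value $T\alpha$ must coincide with $\alpha$. The main obstacle is legitimising the passage to the limit inside \eqref{contraction2}; this rests entirely on the preliminary convergence $T^n\alpha\to\alpha$ supplied by Lemma~\ref{l2}, without which the right-hand side would not vanish and the argument would collapse.
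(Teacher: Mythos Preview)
Your proposal is correct and follows essentially the same route as the paper: invoke Lemma~\ref{l2} to obtain $T^n\alpha\to\alpha$, pass to the limit in \eqref{contraction2} at the point $\alpha$ to conclude $T^{N_\alpha^\alpha}\alpha=\alpha$, and then use the resulting periodicity to force $T\alpha=\alpha$ via uniqueness of limits. The only cosmetic difference is that the paper phrases the final step as a proof by contradiction (setting $\beta=T\alpha$ and deriving a contradiction with $T^n\alpha\to\alpha$), whereas you state it directly by extracting the constant subsequence $\{T^{mN+1}\alpha\}_{m\ge0}$.
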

	\begin{proof}
		Assuming $x\in X$ and $\ds\lim_{n\to\infty}T^nx=\alpha$, lemma \ref{l2} indicates that $\alpha \in X$ and $\ds\lim_{n\to\infty}T^n\alpha=\alpha$. According to (\ref{contraction2}), for every $n\in \mathbb{N}_0$, the inequality $0\leq\rho(T^{N_\alpha^\alpha}\alpha,T^{N_\alpha^\alpha+n}\alpha)\leq k\rho(\alpha,T^n\alpha)$ holds. Given $\ds\lim_{n\to\infty}T^n\alpha=\alpha$, it follows that $\ds\lim_{n\to\infty}\rho(T^{N_\alpha^\alpha}\alpha,T^{N_\alpha^\alpha+n}\alpha)=0$, since the metric function is continuous. Consequently, $\ds\rho(T^{N_\alpha^\alpha}\alpha,\alpha)=\lim_{n\to\infty}\rho(T^{N_\alpha^\alpha}\alpha,T^{N_\alpha^\alpha+n}\alpha)=0$, implying $T^{N_\alpha^\alpha}\alpha=\alpha$. Suppose $T\alpha=\beta\neq \alpha$. Then, using $T^{N_\alpha^\alpha}\alpha=\alpha$, it follows that $T^{nN_\alpha^\alpha+1}\alpha =\beta$ for any $n\in\mathbb{N}_0$. However, since $\beta\neq \alpha$, it becomes evident that $\ds\lim_{n\to\infty}T^{n}\alpha= \alpha$ is false, contradicting $\ds\lim_{n\to\infty}T^{n}\alpha= \alpha$. Therefore, $T\alpha=\alpha$.
	\end{proof}
	\begin{cor}\label{c1}
		Consider a metric space $(X,\rho)$ and a sequence $\{x_n\}_{n=0}^\infty$ within $X$ such that $\ds\lim_{n\to\infty}x_n=x'$. For any point $y$ in $X$ where $y$ is not equal to $x'$, it follows that $\inf\{\rho(y,x_n):n\in \mathbb{N}_0,x_n\neq y\}$ is greater than zero.
	\end{cor}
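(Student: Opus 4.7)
The plan is to exploit the fact that $y\neq x'$ gives a strictly positive gap $d=\rho(y,x')>0$, which the tail of the sequence cannot enter. So I would start by setting $d=\rho(y,x')$, observing $d>0$, and then invoking the convergence $x_n\to x'$ with $\varepsilon=d/2$ to produce an index $N\in\mathbb{N}$ such that $\rho(x_n,x')<d/2$ for every $n\geq N$.

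Next, by the (reverse) triangle inequality, for every $n\geq N$ we have
\[
\rho(y,x_n)\geq \rho(y,x')-\rho(x',x_n)>d-\tfrac{d}{2}=\tfrac{d}{2}>0,
\]
which in particular forces $x_n\neq y$ for all $n\geq N$. So the ``tail'' part of the set $\{\rho(y,x_n):n\in\mathbb{N}_0,\ x_n\neq y\}$ is bounded below by $d/2$.

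It remains to handle the finitely many indices $n\in\{0,1,\dots,N-1\}$ with $x_n\neq y$. For each such $n$ one has $\rho(y,x_n)>0$, and since the collection of these values is finite, its minimum $m$ is strictly positive (if the collection happens to be empty, no lower bound is needed from this part). Taking
\[
\inf\{\rho(y,x_n):n\in\mathbb{N}_0,\ x_n\neq y\}\geq \min\!\left(\tfrac{d}{2},\,m\right)>0
\]
finishes the proof.

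There is essentially no obstacle; the only subtle point is the bookkeeping distinction between the infinite tail (handled by convergence and the triangle inequality) and the finite initial segment (handled by finiteness of positive numbers having a positive minimum), together with the trivial check that the infimum set is non-empty because $y\neq x'$ guarantees $x_n\neq y$ for all sufficiently large $n$.
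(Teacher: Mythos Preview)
Your proof is correct and follows essentially the same approach as the paper's: split into the tail (controlled by convergence and the triangle inequality, yielding a bound of $d/2$, which the paper writes as $\delta$ with $\rho(y,x')=2\delta$) and the finite initial segment (controlled by positivity of a finite minimum). If anything, your version is slightly more careful, since you explicitly address the possibility that the initial finite collection is empty and note that the infimum set is nonempty.
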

	\begin{proof}
		Consider $y\in X$ with $y\neq x'$. This implies $\rho(y,x')=2\delta>0$. Given that $\ds\lim_{n\to\infty}x_n=x'$, there must be some $N\in \mathbb{N}$ such that for all $n\geq N$, the inequality $\rho(x_n,x')<\delta$ holds. Applying the triangle inequality and knowing $\rho(y,x')=2\delta$, we deduce that
        \begin{equation}\label{c1e1}
        \inf_{n\geq N}\rho(y,x_n)\geq \delta.
        \end{equation}
        There is a minimum value $\min\{\rho(y,x_n):n\in \{1,2,\dots N-1\},x_n\neq y\}=M>0$. Consequently, using (\ref{c1e1}), it follows that $\ds \inf\{\rho(y,x_n):n\in \mathbb{N}_0,x_n\neq y\}\geq \min\{\delta,M\}$. Since both $\delta>0$ and $M>0$, we conclude \\
        $\ds \inf\{\rho(y,x_n):n\in \mathbb{N}_0,x_n\neq y\}>0$.
	\end{proof}
	\begin{lem}\label{l4}
		Consider a metric space $(X,\rho)$, where $T:X\to X$ and $\alpha\in X$ is the unique fixed point of $T$ in $X$. Given $k\in(0;1)$ and the condition $\ds\lim_{n\to\infty}T^nx=\alpha$ for all $x\in X$, it follows that $T$ is a universal iterative contraction map with constant $k$.
	\end{lem}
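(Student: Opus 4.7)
\smallskip

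\noindent\textbf{Proof proposal.} The plan is to verify the two clauses of the definition of a universal iterative contraction map directly from the hypothesis $\lim_{n\to\infty}T^nx=\alpha$ for every $x\in X$. Throughout I will exploit two simple observations: first, for any $x,y\in X$ the sequences $T^nx$ and $T^ny$ both converge to $\alpha$, so $\rho(T^nx,T^ny)\to 0$; second, if $x\neq \alpha$ then $T^nx\neq x$ for every $n\in\mathbb{N}$, because $T^nx=x$ would force the orbit $\{T^mx\}$ to be periodic, hence (being also convergent) eventually constant, contradicting $x\neq \alpha$ and the uniqueness of the fixed point.

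For condition (\ref{contraction1}), fix distinct $x,y\in X$. Then $\rho(x,y)>0$, while $\rho(T^nx,T^ny)\leq \rho(T^nx,\alpha)+\rho(T^ny,\alpha)\to 0$ as $n\to\infty$. Hence there exists $N\in\mathbb{N}$ with $\rho(T^Nx,T^Ny)\leq k\rho(x,y)$, and I set $N_y^x:=N$.

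For condition (\ref{contraction2}), fix $x\in X$. If $x=\alpha$ then every iterate is $\alpha$, the inequality becomes $0\leq 0$, and any $N_x^x\in\mathbb{N}$ works. Suppose $x\neq \alpha$. By the observation above, $T^nx\neq x$ for all $n\in\mathbb{N}$, so Corollary \ref{c1} applied to the sequence $\{T^nx\}_{n=0}^\infty$ and the point $x$ yields
\[
\delta:=\inf\{\rho(x,T^nx):n\in\mathbb{N}\}>0.
\]
Since $T^mx\to\alpha$, I choose $N\in\mathbb{N}$ large enough that $\rho(T^mx,\alpha)\leq \tfrac{k\delta}{2}$ for every $m\geq N$. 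Then for every $n\in\mathbb{N}$ the triangle inequality gives
\[
\rho(T^Nx,T^{N+n}x)\leq \rho(T^Nx,\alpha)+\rho(T^{N+n}x,\alpha)\leq k\delta\leq k\rho(x,T^nx),
\]
so $N_x^x:=N$ realizes the required bound.

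The main obstacle is precisely the clause (\ref{contraction2}), because the index $N_x^x$ must be selected independently of $n$ while the right-hand side $k\rho(x,T^nx)$ varies with $n$ and a priori can be made small. The resolution is exactly the uniform lower bound $\delta>0$ provided by Corollary \ref{c1}, which converts the problem into finding a single $N$ that makes the left-hand side uniformly less than $k\delta$; the convergence $T^mx\to\alpha$ then supplies such an $N$ through a plain triangle-inequality estimate. Both clauses together establish that $T$ is a universal iterative contraction map with the prescribed constant $k$.
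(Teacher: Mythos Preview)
Your proof is correct and follows essentially the same approach as the paper's: both rely on Corollary~\ref{c1} to obtain a uniform positive lower bound on the right-hand side of (\ref{contraction2}) and then use the convergence $T^mx\to\alpha$ together with the triangle inequality to make the left-hand side uniformly small. The main organizational difference is that the paper merges both clauses into a single stronger inequality $\rho(T^{N_y^x}y,T^{N_y^x+n}x)\leq k\rho(y,T^nx)$ and splits into cases according to which of $x,y$ equals $\alpha$, whereas you treat (\ref{contraction1}) and (\ref{contraction2}) separately; this lets you dispatch (\ref{contraction1}) more directly (without invoking Corollary~\ref{c1}), and your periodicity observation that $T^nx\neq x$ for $x\neq\alpha$ is a clean way to remove the ``$T^nx\neq y$'' qualifier from the infimum.
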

	\begin{proof}
		Consider $x,y\in X$. There are three cases:
		\begin{enumerate}[label=case-\arabic*.]
			\item $y\neq \alpha$. Given the assumption $\ds\lim_{n\to\infty}T^nx=\alpha$ and corollary \ref{c1}, it can be deduced that
            \begin{equation}\label{t1e1}
            \inf\{\rho(y,T^nx):n\in \mathbb{N}_0,T^nx\neq y\}=M>0.
            \end{equation}
            According to the assumption $\ds\lim_{n\to\infty}T^ny=\lim_{n\to\infty}T^nx=\alpha$, there exists an $N_y^x\in \mathbb{N}$ such that for any $n\in \mathbb{N}_0$, the inequality $\rho(T^{N_y^x}y,T^{N_y^x+n}x)\leq \rho(T^{N_y^x}y,\alpha)+\rho(\alpha,T^{N_y^x+n}x)\leq kM$ holds. By utilizing this and (\ref{t1e1}), it becomes evident that the inequality
            \begin{equation}\label{contraction}
            \rho(T^{N_y^x}y,T^{N_y^x+n}x)\leq k\rho(y,T^nx),
            \end{equation}
            is valid for every $n\in \mathbb{N}_0$ where $y\neq T^nx$.
            
            In the case where $y= T^nx$, it is straightforward to verify that (\ref{contraction}) also applies for $N_y^x$.						Consequently, (\ref{contraction1}) and (\ref{contraction2}) are valid for $N_y^x$ and all $n\in \mathbb{N}_0$.
			
			\item $y= \alpha$ and $x\neq \alpha$. According to the assumption, $\ds\lim_{n\to\infty}T^nx=\alpha$. Therefore, there exists an $N_y^x\in \mathbb{N}$ such that the inequality $\rho(y,T^{N_y^x}x)\leq k\rho(y,x)$ holds true. Consequently, by assuming that $\alpha$ is a fixed point of $T$, we can conclude that inequality (\ref{contraction1}) holds.
			
			\item $x=y=\alpha$. Here, given that $\alpha$ is a fixed point of $T$, it is straightforward to verify that (\ref{contraction2}) is satisfied for $N_y^x=1$ and for every $n\in \mathbb{N}_0$.
		\end{enumerate}
		In all three scenarios, we find that there is an $N_y^x\in \mathbb{N}$ such that:
        \begin{itemize}
        \item (\ref{contraction1}) is satisfied if $x\neq y$.
        \item (\ref{contraction2}) holds true for every $n\in \mathbb{N}_0$ when $x=y$. \end{itemize}
        Consequently, $T$ serves as a universal iterative contraction map with a constant $k$.
	\end{proof}
	
	\section{proof of main result}
	
	\subsection{proof of Theorem \ref{t1}}
	
	According to Lemma \ref{l4}, we deduce that \ref{st-1} is a consequence of \ref{st-2}.
    
    Now, suppose that \ref{st-1} is true. From Lemma \ref{l2}, we find that there exists an $\alpha \in X$ such that $\ds\lim_{n\to\infty}T^nx=\alpha$ for every $x\in X$. By applying Lemma \ref{l3}, we determine that $\alpha$ is the fixed point of $T$. The only remaining task is to demonstrate that $\alpha$ is the unique fixed point of $T$. Assume that there exists a $\beta\in X$ such that $T\beta=\beta$ and $\beta\neq \alpha$. In this case, we observe that $\ds\lim_{n\to\infty}T^n\beta=\beta$. This limit, along with the conditions that $\beta\in X$ and $\beta\neq \alpha$, contradicts Lemma \ref{l2}. Consequently, \ref{st-2} is derived from \ref{st-1}.
	
	\subsection{proof of Theorem \ref{t3}}
	By applying Theorem \ref{t1}, it becomes apparent that \ref{t3st-2} is a consequence of \ref{t3st-1}.
	
	Assuming that \ref{st-2} is valid. According to this assumption, for each $x\in X$, the limit $\ds\lim_{n\to \infty}T^nx=l(x)$ exists and is an element of $X$, with the property that $Tl(x)=l(x)$. Define $S(x)=\{T^nx\}_{n=0}^\infty\cup \{l(x)\}$ for every $x\in X$. Consider an arbitrary $x \in X$. Given that $T l(x) = l(x)$, we have
    \begin{equation}\label{t3e1}
    T(S(x))=T(\{T^nx\}_{n=0}^\infty)\cup T(\{l(x)\})=\{T^nx\}_{n=1}^\infty\cup\{l(x)\}\subseteq S(x).
    \end{equation}
    
    Next, we demonstrate that $S(x)$ is compact. Let $\mathcal{P}$ be any open cover of $S(x)$. Choose $P'\in \mathcal{P}$ such that $l(x)\in P'$. Since $\ds\lim_{n\to \infty}T^nx=l(x)$, there exists an $N\in \mathbb{N}$ such that $\{T^nx\}_{n=N}^\infty\subseteq P'$. For each $0 \leq n < N$, select $P_n \in \mathcal{P}$ such that $T^nx\in P_n$. Thus, $\mathcal{P}'=\{P_n\}_{n=0}^{n=N-1}\cup \{P'\}$ forms a finite subcover of $\mathcal{P}$. Consequently, $S(x)$ is compact, making it a closed subset of $(X,\rho)$, and $(S(x),\rho)$ is a complete metric space.
    
    Consider an arbitrary element $u$ of $S(x)$. There are two possibilities: $u\in \{T^nx\}_{n=0}^\infty$ or $u=l(x)$. In both scenarios, it is evident that $\ds\lim_{n\to \infty}T^nu=l(x)$. Therefore, the sequence $\{T^nu\}_{n=0}^\infty$ converges to $l(x)$ for every $u\in S(x)$, and $l(x)$ is a unique fixed point of $T$ in $S(x)$. Indeed, if there exists an $\alpha\in S(x)$ such that $\alpha \neq l(x)$ and $T\alpha=\alpha$, it contradicts the fact that $\ds\lim_{n\to \infty}T^nu=l(x)$ for each $u\in S(x)$. From (\ref{t3e1}) and Lemma \ref{l4}, it follows that $T$ is a universal iterative contraction map on $S(x)$ for any $k\in(0,1)$. Using (\ref{t3e1}), the fact that $x\in S(x)$, and the properties that $S(x)$ is closed and $(S(x),\rho)$ is a complete metric space, we conclude that $\mathcal{C}=\{S(x):x\in X\}$ is a closed cover of $X$ that meets the conditions in \ref{t3st-1}.
	
	\subsection{proof of Theorem \ref{t2}}
	Let $n\in \mathbb{N}_0$. According to Lemma \ref{l1}, the inequality $\ds\sup_{i\geq p_n}\rho(T^{p_n}x,T^ix)\leq k^nM_x$ holds true. This implies that the sequence $\{T^ix\}_{i=p_n}^\infty$ is contained within the ball $B[T^{p_n}x,k^nM_x]$. Given the assumption that $\ds\alpha=\lim_{i\to \infty}T^ix$, and considering that $B[\cdot,\cdot]$ represents a closed ball, it follows that $\alpha$ is an element of $B[T^{p_n}x,k^nM_x]$, meaning $\rho(\alpha,T^{p_n}x)\leq k^n M_x$. Utilizing the inequality $\ds\sup_{i\geq p_n}\rho(T^{p_n}x,T^ix)\leq k^nM_x$, we can deduce that $$\sup_{i\geq p_n}\rho(\alpha,T^ix)\leq \sup_{i\geq p_n}\rho(T^{p_n}x,T^ix)+\rho(\alpha,T^{p_n}x)\leq 2k^nM_x.$$
	
	\section{Examples}
	\begin{prop}\label{prop1}
		Consider $A$ as a real interval and $f:A\to A$ as a function with a continuous derivative on $A$. Let $n\in \mathbb{N}$ and $k\in[0,1)$. It is given that $\ds \left|\prod_{i=0}^{n-1}f'(f^i(x))\right|\leq k$ for all $x\in A$. Consequently, for any $u,v\in A$, the condition in (\ref{in-e-1}) is satisfied.
	\end{prop}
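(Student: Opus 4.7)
The key observation is that the hypothesis is exactly a bound on the derivative of the $n$-fold iterate $f^n$. I will therefore reduce the problem to a one-line application of the chain rule followed by the mean value theorem.

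\textbf{Step 1: Identify $\left(f^n\right)'$ via the chain rule.} Since $f$ has a continuous derivative on the interval $A$ and $f(A)\subseteq A$, the iterates $f^i$ are differentiable on $A$ for every $i\in\mathbb{N}_0$. A straightforward induction on $n$, using the chain rule $\bigl(f\circ f^i\bigr)'(x)=f'\!\bigl(f^i(x)\bigr)\cdot(f^i)'(x)$, yields
$$
(f^n)'(x)=\prod_{i=0}^{n-1}f'\!\bigl(f^i(x)\bigr)\qquad\text{for every }x\in A.
$$
The given hypothesis then reads $|(f^n)'(x)|\leq k$ for all $x\in A$.

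\textbf{Step 2: Apply the mean value theorem.} Fix arbitrary $u,v\in A$. Since $A$ is an interval, the closed segment joining $u$ and $v$ is contained in $A$, and $f^n$ is differentiable (hence continuous) there. The mean value theorem furnishes a point $c$ between $u$ and $v$ with
$$
f^n(u)-f^n(v)=(f^n)'(c)\,(u-v).
$$
Taking absolute values and inserting the bound from Step 1 gives $|f^n(u)-f^n(v)|\leq k|u-v|$, which is precisely the inequality (\ref{in-e-1}) in the metric $\rho(x,y)=|x-y|$ on $A$, with $T=f$.

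\textbf{Expected difficulty.} There is essentially no obstacle here: the statement is a textbook consequence of the chain rule and the mean value theorem. The only minor care needed is to verify that the segment between $u$ and $v$ stays inside $A$ so that the MVT applies, which is immediate from $A$ being an interval, and to confirm via induction that $f^n$ is differentiable on $A$ (which follows from $f(A)\subseteq A$ together with the continuous differentiability of $f$).
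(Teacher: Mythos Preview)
Your proof is correct and follows essentially the same route as the paper: both combine the chain rule identity $(f^n)'(x)=\prod_{i=0}^{n-1}f'(f^i(x))$ with the mean value theorem on the interval between $u$ and $v$. The only cosmetic difference is that the paper phrases the argument by contradiction, whereas you argue directly.
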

	\begin{proof}
		Suppose there exist $u,v\in A$ such that $|f^n(v)-f^n(u)|> k |v-u|$. We can assume, without loss of generality, that $u<v$. This implies $\ds \left|\frac{f^n(v)-f^n(u)}{v-u}\right|>k$. Consequently, there is a point $\xi\in A$ with $u\leq \xi\leq v$ where $|(f^n(\xi))'|>k$. By applying the chain rule, we have $(f^n(\xi))'=f'(f^{n-1}(\xi))(f^{n-1}(\xi))'$. Therefore, it follows that $\ds \left|\prod_{i=0}^{n-1}f'(f^i(\xi))\right|> k$. This conclusion contradicts our initial assumptions. Hence, for all $u,v\in A$, the inequality $|f^n(v)-f^n(u)|\leq k |v-u|$ must hold.
	\end{proof}
	
	This instance does not have an evident fixed point and demonstrates the error estimate.
	
	\begin{ex}
		We need to determine the fixed points of the mapping $T:\mathbb{R}\to \mathbb{R}$, which is given by $T(x)=\ds 1.5+x -\frac{3}{1+\exp(-x)}+0.5 \cos\left(\frac{x}{2}\right)$.
	\end{ex}
	\begin{proof}
	We can calculate that 
	\begin{itemize}
		\item For all $a$ values less than or equal to -1 and $b$ values greater than or equal to 3, it is possible to express
		\begin{equation}\label{e1e1}
			T[a,b]\subseteq [a,b].
		\end{equation}
		\item There holds 
		\begin{equation}\label{e1e2}
			\sup_{x\in [-1,3]}|T'(x)|\leq 0.62\ and\ \sup_{x\in \mathbb{R}}|T'(x)|\leq 1.25.
		\end{equation}
		\item For all $x$ values greater than 3, the inequality chain
        \begin{equation}\label{e1e3}
        -2.0\leq Tx-x\leq -0.99,
        \end{equation}
        holds true.
		\item For every $x<-1$ there holds
		\begin{equation}\label{e1e4}
			0.99\leq Tx-x\leq2.
		\end{equation}
	\end{itemize}
	Based on (\ref{e1e1}), (\ref{e1e3}), and (\ref{e1e4}), it can be deduced that for any $x$ in the intervals $(-\infty,-1)\cup (3,\infty)$ and for $n$ greater than or equal to $N_x=\inf\{i\in \mathbb{N}:0.99 i\geq \dist(x,[-1,3])\}$, the condition
    \begin{equation}\label{e1e5}
    T^nx\in [-1,3].
    \end{equation}
    holds true.
    
    When $x$ is within the range $[-1,3]$, we set $N_x=0$.
	
	Based on the definition of $N_x$, it can be deduced that if $a\leq -1$, $b\geq3$, and $x$ lies within the interval $[a,b]$, then $N_x$ is at most $\max\{N_a,N_b\}$. Subsequently, by applying (\ref{e1e1}), (\ref{e1e2}), and (\ref{e1e5}), it is evident that for any $a\leq -1$, $b\geq3$, and $x$ in the range $[a,b]$, the inequality $\ds \left|\prod_{i=0}^{P(a,b)-1}T'(T^i(x))\right|\leq 0.62$ holds true, where $P(a,b)$ is defined as $\max\{N_a+\left\lceil\log_{0.62}\left(\frac{0.62}{1.25^{N_a}}\right)\right\rceil,N_b+\left\lceil\log_{0.62}\left(\frac{0.62}{1.25^{N_b}}\right)\right\rceil\}$. It is straightforward to confirm that $T'$ is continuous on $\mathbb{R}$. Utilizing (\ref{e1e3}), (\ref{e1e4}), and proposition \ref{prop1}, we derive that:
    \begin{itemize} 
        \item for any $x,y\in \mathbb{R}$, the condition (\ref{contraction1}) is satisfied with $k=0.62$ and $N_x^y=P(\min\{x,y,-1\},\max\{x,y,3\})$. 
        \item for any $x\in \mathbb{R}$, (\ref{contraction2}) holds true, with $k=0.62$ and $N_x^x=P(\min\{x,-1\},\max\{x,3\})$. 
    \end{itemize} 
    This implies that $T$ functions as an universal iterative contraction map. Applying Theorem \ref{t1}, we conclude that $T$ possesses a unique fixed point $\alpha\in \mathbb{R}$ and $\ds \lim_{n\to \infty}T^nx=\alpha$ for any $x\in \mathbb{R}$. We initiated an approximation of $\alpha$ starting with $u=8.5$. Define the sequence $\{p_n\}_{n=0}^\infty\subset \mathbb{N}_0$ as $p_0=0$, $p_{n+1}=p_n+N_{T^{p_n}u}^{T^{p_n}u}$. Calculations show that $p_1=10, p_2=11, p_3=12 \dots$, and $M_u=20.6474$. Then, by employing Theorem \ref{t2}, we derive
	$$
	\begin{array}{|l|l|l|l|}
		\hline
		n&p_n&T^{p_n}u&|T^{p_n}u-\alpha|\leq\cdot\\\hline
		0&0&8.5&20.6474\\\hline
		1&10&0.654004&12.8014\\\hline
		2&11&0.653772&7.93684\\\hline
		5&14&0.653698&1.89157\\\hline
		10&19&0.653697&0.173293\\\hline
		20&29&0.653697&0.00145445\\\hline
	\end{array}
	$$
\end{proof}
	
	In the following example, we demonstrate that our findings are applicable, whereas the most widely recognized extensions of the Banach's fixed-point theorem are not. We will illustrate that Theorems \ref{HR-thm}, \ref{meir-keeler-thm}, and \ref{CI-thm} (excluding the necessity for $T$ to be continuous \cite{CI-Introduction-3}) are inapplicable because the contraction principles of Banach, Kannan, and Chatterjea are specific instances of the Hardy-Rogers contraction principle.
	
\begin{ex}
	Consider the Banach space $\ell_1$, and define the map $T:\ell_1\to \ell_1$ by the expression $\ds T\left( \sum_{i=1}^\infty x_ie_i\right)= \sum_{i=1}^\infty\left(\frac{1}{\sqrt[i]{2}}x_i+\left(1-\frac{1}{\sqrt[i]{2}}\right)\frac{1}{i^2}\right)e_i$. This map $T$ serves as a universal iterative contraction with $\ds k=\frac{1}{2}$. Furthermore, Theorems \ref{meir-keeler-thm}, \ref{HR-thm}, and \ref{CI-thm} (without the necessity for $T$ to be continuous \cite{CI-Introduction-3}) are not applicable.
\end{ex}
\begin{proof}
	Initially, we demonstrate that $T$ serves as a universal iterative contraction map, with $\ds k=\frac{1}{2}$.
	
	Consider that for any $x,y\in \ell_1$ and $p\in \mathbb{N}$, the following holds: $\ds \|T^px-T^py\|_1=\sum_{i=1}^\infty\frac{|x_i-y_i|}{(\sqrt[i]{2})^p}$. Define $J^y_x\in \mathbb{N}$ such that $\ds \sum_{i=J^y_x+1}^{\infty}|x_i-y_i|\leq\frac{1}{4}\sum_{i=1}^{J^y_x}|x_i-y_i|$. The sequence $\ds \left\{\frac{1}{(\sqrt[i]{2})^p}\right\}_{i=1}^\infty$ is positive, increasing, and has a supremum of 1 for any $p\in \mathbb{N}$. Consequently, we have:
    $$ 
    \begin{array}{lll}
    \|T^px-T^py\|_1&=&\ds\sum_{i=1}^{J^y_x}\frac{|x_i-y_i|}{(\sqrt[i]{2})^p}+\sum_{i=J^y_x+1}^{\infty}\frac{|x_i-y_i|}{(\sqrt[i]{2})^p}\\
    &\leq&\ds\frac{1}{(\sqrt[{J^y_x}]{2})^p}\sum_{i=1}^{J^y_x}|x_i-y_i|+\sum_{i=J^y_x+1}^{\infty}|x_i-y_i|\\
    &\leq&\ds\left(\frac{1}{(\sqrt[{J^y_x}]{2})^p}+\frac{1}{4}\right)\sum_{i=1}^{J^y_x}|x_i-y_i|\\
    &\leq &\ds\left(\frac{1}{(\sqrt[{J^y_x}]{2})^p}+\frac{1}{4}\right)\|x-y\|_1. \end{array}
    $$
    Therefore, by setting $N^y_x=2J^y_x$, we confirm that (\ref{contraction1}) is satisfied.
	
	Consider $x\in \ell_1$. For any $p\in \mathbb{N}_0$ and $n\in \mathbb{N}$, the expression can be represented as
    \begin{equation}\label{e2e5}
    \|T^px-T^{p+n}x\|_1=\sum_{i=1}^{\infty} \frac{1}{(\sqrt[i]{2})^p}\left(1-\frac{1}{(\sqrt[i]{2})^n}\right)\left|x_i-\frac{1}{i^2}\right|.
    \end{equation}
    Define $J_x\in \mathbb{N}$ such that $\ds \sum_{i=J_x}^{\infty}\left|x_i-\frac{1}{i^2}\right|\leq \frac{1}{4}\sum_{i=1}^{J_x-1}\left|x_i-\frac{1}{i^2}\right|$. The sequence $\ds \left\{1-\frac{1}{(\sqrt[i]{2})^n}\right\}_{i=1}^\infty$ is positive and decreasing for any $n\in \mathbb{N}$. Thus,
    $$
    \begin{array}{lll}
    \ds\sum_{i=J_x}^{\infty}\left(1-\frac{1}{(\sqrt[i]{2})^n}\right)\left|x_i-\frac{1}{i^2}\right|&\leq&\ds\left(1-\frac{1}{(\sqrt[J_x]{2})^n}\right)\sum_{i=J_x}^{\infty}\left|x_i-\frac{1}{i^2}\right|\\
    &\leq&\ds\left(1-\frac{1}{(\sqrt[J_x-1]{2})^n}\right)\frac{1}{4}\sum_{i=1}^{J_x-1}\left|x_i-\frac{1}{i^2}\right|\\
    &\leq&\ds\frac{1}{4}\sum_{i=1}^{J_x-1}\left(1-\frac{1}{(\sqrt[i]{2})^n}\right)\left|x_i-\frac{1}{i^2}\right|.
    \end{array}	$$
    Then, considering that the sequence $\ds \left\{\frac{1}{(\sqrt[i]{2})^p}\right\}_{i=1}^\infty$ is positive, increasing, and has a supremum of $1$ for any $p\in \mathbb{N}$, and using (\ref{e2e5}), we derive that
    $$
    \begin{array}{lll}
    &&\ds \|T^px-T^{p+n}x\|_1\\
    &=&\ds\sum_{i=1}^{J_x-1}\frac{1}{(\sqrt[i]{2})^p}\left(1-\frac{1}{(\sqrt[i]{2})^n}\right)\left|x_i-\frac{1}{i^2}\right|+\sum_{i=J_x}^{\infty}\frac{1}{(\sqrt[i]{2})^p}\left(1-\frac{1}{(\sqrt[i]{2})^n}\right)\left|x_i-\frac{1}{i^2}\right|\\
    &\leq&\ds \frac{1}{(\sqrt[J_x-1]{2})^p}\sum_{i=1}^{J_x-1}\left(1-\frac{1}{(\sqrt[i]{2})^n}\right)\left|x_i-\frac{1}{i^2}\right|+\sum_{i=J_x}^{\infty}\left(1-\frac{1}{(\sqrt[i]{2})^n}\right)\left|x_i-\frac{1}{i^2}\right|\\
    &\leq&\ds \left(\frac{1}{(\sqrt[J_x-1]{2})^p}+\frac{1}{4}\right)\sum_{i=1}^{J_x-1}\left(1-\frac{1}{(\sqrt[i]{2})^n}\right)\left|x_i-\frac{1}{i^2}\right|\\
    &\leq&\ds\left(\frac{1}{(\sqrt[J_x-1]{2})^p}+\frac{1}{4}\right)\|x-T^{n}x\|_1.
    \end{array}
    $$
    Consequently, if we set $\ds N_x^x=2 (J_x-1)$, then (\ref{contraction2}) is satisfied for every $n\in \mathbb{N}$.
	
	Therefore, $T$ serves as a universal iterative contraction map, with $\ds k=\frac{1}{2}$.
	
	We now demonstrate that Theorem \ref{meir-keeler-thm} is inapplicable.
	
	Consider $\varepsilon>0$ and $\delta>0$. Let $x\in \ell_1$, and $y\in \ell_1$ be defined as $\ds y=x+\left(\varepsilon+\frac{\delta}{2}\right)e_N$, where $N\in \mathbb{N}$ is chosen such that $\ds\frac{\varepsilon+\frac{\delta}{2}}{\sqrt[N]{2}}\geq \varepsilon+\frac{\delta}{4}$. The existence of such an $N$ is guaranteed by the limit $\ds \lim_{n\to \infty}\frac{\varepsilon+\frac{\delta}{2}}{\sqrt[n]{2}}=\varepsilon+\frac{\delta}{2}>\varepsilon+\frac{\delta}{4}$. Consequently, we have $\varepsilon<\|x-y\|_1<\delta$, yet\\ $\ds\|Tx-Ty\|_1=\sum_{i=1}^\infty\frac{|x_i-y_i|}{\sqrt[i]{2}}=\frac{\varepsilon+\frac{\delta}{2}}{\sqrt[N]{2}}\geq \varepsilon+\frac{\delta}{4}$. This implies that for any $\varepsilon>0$ and $\delta>0$, we can find $x,y \in \ell_1$ such that (\ref{meir-keeler-contraction}) does not hold. In other words, Theorem \ref{meir-keeler-thm} is not applicable in this context.
	
	We will proceed with demonstrating that Theorem \ref{HR-thm} is inapplicable.
	
	Consider $k_1, k_2, k_3 \in [0,1)$ such that $k_1 + k_2 + k_3 = k \in [0,1)$. Define $\ds x = \sum_{i=1}^\infty \frac{e_i}{i^2} \in \ell_1$ and $y = x + e_N$, where $N \in \mathbb{N}$ is chosen so that $\ds \frac{1}{\sqrt[N]{2}} \geq \frac{3}{4} + \frac{1}{4}k$. The existence of such an $N$ is guaranteed by the limit $\ds \lim_{n \to \infty} \frac{1}{\sqrt[n]{2}} = 1 > \frac{3}{4} + \frac{1}{4}k$. Given $Tx = x$, $\|x-y\|_1 = 1$, and $\ds \|Tx - Ty\|_1 = \ds \sum_{i=1}^\infty \frac{|x_i - y_i|}{\sqrt[i]{2}} = \ds \frac{1}{\sqrt[N]{2}} \geq \ds \frac{3}{4} + \frac{1}{4}k$, along with (\ref{e2e5}), we derive the following inequalities: 

    $$
    \begin{array}{ll}
        & \ds k\|x-y\|_1 = k < \frac{3}{4} + \frac{1}{4}k \leq \|Tx - Ty\|_1 \\[15pt]
        & k\|x-Tx\|_1 = 0 < \|Tx - Ty\|_1 \\[15pt]
        & \ds k\|y-Ty\|_1 < \|y-Ty\|_1 = \sum_{i=1}^\infty \left(1 - \frac{1}{\sqrt[i]{2}}\right)\left|y_i - \frac{1}{i^2}\right| = 1 - \frac{1}{\sqrt[N]{2}} \\
        \leq & \frac{1}{4}(1-k) \leq \ds \frac{1}{4} < \frac{3}{4} + \frac{1}{4}k \leq \|Tx - Ty\|_1 \\[15pt]
        & \ds k\|x-Ty\|_1 = k\|y-Ty-e_N\|_1 \\
        = & k\left\|-e_N + \sum_{i=1}^{\infty} \left(1 - \frac{1}{\sqrt[i]{2}}\right)\left(y_i - \frac{1}{i^2}\right)e_i\right\|_1 \\
        = & \ds k\left\|-e_N + \left(1 - \frac{1}{\sqrt[N]{2}}\right)e_N\right\|_1 \\
        = & k\left\|-\frac{1}{\sqrt[N]{2}}e_N\right\|_1 < k < \frac{3}{4} + \frac{1}{4}k \leq \|Tx - Ty\|_1 \\[15pt]
        & k\|y-Tx\|_1 = k\|y-x\|_1 < \|Tx - Ty\|_1.
    \end{array}
    $$

    By summing these five inequalities, each multiplied by $\ds \frac{k_1}{k}, \frac{k_2}{2k}, \frac{k_2}{2k}, \frac{k_3}{2k}, \frac{k_3}{2k}$ respectively, we obtain:

    $$
    \textstyle \|Tx - Ty\|_1 > k_1\|x-y\|_1 + \frac{k_2}{2}(\|x-Tx\|_1 + \|y-Ty\|_1) + \frac{k_3}{2}(\|x-Ty\|_1 + \|y-Tx\|_1).
    $$

    This implies that for any $k_1, k_2, k_3 \in [0,1)$ such that $k_1 + k_2 + k_3 \in [0,1)$, there exist $x, y \in \ell_1$ for which (\ref{HR-contraction}) does not hold. Therefore, Theorem \ref{HR-thm} is not applicable here.
	
	In conclusion, we demonstrate that Theorem \ref{CI-thm} (excluding the necessity for $T$ to be continuous; see \cite{CI-Introduction-3}) is inapplicable.
	
	Consider $k\in[0,1)$, $x\in \ell_1$, and $n(x)\in \mathbb{N}$. Define $y=x+e_N$, where $N\in \mathbb{N}$ is chosen such that $\ds\frac{1}{(\sqrt[N]{2})^{n(x)}}>k$. The existence of such an $N$ is guaranteed by the limit $\ds \lim_{j\to \infty}\frac{1}{(\sqrt[j]{2})^{n(x)}}=1>k$. Given that $\|x-y\|_1=1$, it follows that $\ds\|T^{n(x)}x-T^{n(x)}y\|_1=\sum_{i=1}^\infty\frac{|x_i-y_i|}{(\sqrt[i]{2})^{n(x)}}=\frac{1}{(\sqrt[N]{2})^{n(x)}}>k=k\|x-y\|_1$. This implies that for any $x\in \ell_1$, $k\in[0,1)$, and $n(x)\in \mathbb{N}$, there exists a $y\in \ell_1$ such that $\|T^{n(x)}x-T^{n(x)}y\|_1>k\|x-y\|_1$. Consequently, Theorem \ref{CI-thm} (without the continuity condition for $T$ \cite{CI-Introduction-3}) is inapplicable.
\end{proof}

The following example has several fixed points.

\begin{ex}
	Consider the Cartesian plane $\mathbb{R}_2^2$. Define the set $U=\{(x,y)\in \mathbb{R}_2^2:0\leq x<1,0<y\leq 1\}$, and let $T:\mathbb{R}_2^2\to \mathbb{R}_2^2$ be a mapping of itself, specified by $\ds T(x,y)=(f_y(x),y)$, where $\ds f_y(x)=\frac{x}{1+5 x y \cos\left(\frac{\pi}{2}x\right)}$. It follows that $T(U)\subseteq U$, and for any $u\in U$, the sequence $\{T^nu\}_{n=0}^\infty$ converges to a fixed point of $T$ in $U$.
\end{ex}
\begin{proof}
	Consider the equation
    \begin{equation}\label{e3e1}
        f_y(x)=\frac{1}{\frac{1}{x}+5 y \cos\left(\frac{\pi}{2}x\right)}.
    \end{equation}
    It can be determined that $f_y(x)$ is an increasing function with respect to $x$ for $(x,y) \in \overline{U}$. Given that $f_y(0)=0$ and $f_y(1)=1$ for every $y\in (0,1]$, it follows that $T(U)\subseteq U$.
	
	Consider $y\in (0,1]$ and $r\in (0,1)$. Then, for any $x\in [0,r]$, the following equality holds:
	\begin{equation}\label{e3e2}
		f_y(x)\leq f_{y,r}(x)=\frac{1}{\frac{1}{x}+5 y \cos\left(\frac{\pi}{2}r\right)}.
	\end{equation}
	
	It is easy to determine that $\ds f^n_{y,r}(x)=\frac{1}{\frac{1}{x}+n 5 y \cos\left(\frac{\pi}{2}r\right)}$, and that $f_{y,r}([0,r])\subseteq [0,r]$. By applying (\ref{e3e2}), the inclusion $f_y([0,1])\subseteq [0,1]$, and the fact that $f_y(x)$ is increasing for $(x,y) \in U$, we deduce that for any $n\in \mathbb{N}$ and $x\in [0,r]$, the following holds
    \begin{equation}\label{e3e3}
        \begin{array}{lll}
            0&\leq& f_y^n(x)\leq f_y^{n-1}(f_{y,r}(x))\leq f_y^{n-2}(f^2_{y,r}(x))\leq \dots \leq f^n_{y,r}(x)\\
            &=&\ds\frac{1}{\frac{1}{x}+n 5 y\cos\left(\frac{\pi}{2}r\right)}\leq \frac{1}{n 5 y\cos\left(\frac{\pi}{2}r\right)}.
        \end{array}
    \end{equation}
	
	The images below offer a visual depiction of how $f_y$ and $f_{y,r}$ behave.
	\begin{figure}[H]
		\centering
		\includegraphics[scale=1.4]{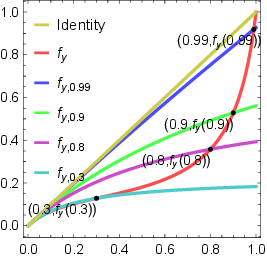}
		\includegraphics[scale=1.4]{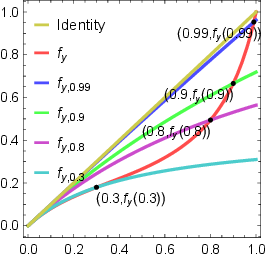}
		\includegraphics[scale=1.4]{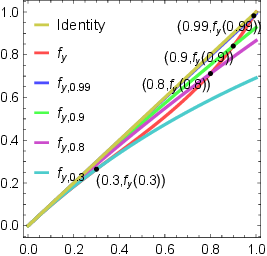}
		\caption{Above in left $y=1$. Above in right $y=0.5$. Below $y=0.1$.}
	\end{figure}
	
	By defining the sets $S^y_r=\{(u,v)\in\mathbb{R}_2^2:0\leq u\leq r,v=y\}$, it becomes evident that $S^y_r$ is a closed subset of $U$ for any $y\in (0,1]$ and $r\in (0,1)$. Consider $y\in (0,1]$ and $r\in (0,1)$. Then, for each $x\in [0,r]$, the following holds
    \begin{equation}\label{e3e3.1}
    0\leq f_y(x)\leq x.
    \end{equation}
    Therefore, $T(S^y_r)\subseteq S^y_r$. Let $x_1,x_2\in [0,r]$, with $x_1\neq x_2$, and define \\ $\ds N(x_1,x_2)=\left\lceil\frac{2}{|x_1-x_2|5 y\cos\left(\frac{\pi}{2}r\right)}\right\rceil$. From (\ref{e3e3}), it follows that \\ $\ds f_y^{N(x_1,x_2)}(x_1),f_y^{N(x_1,x_2)}(x_2)\in \left[0,\frac{|x_1-x_2|}{2}\right]$. Consequently, for any $s_1,s_2\in S^y_r$, the inequality $\ds\|T^{N^{s_1}_{s_2}}s_1-T^{N^{s_1}_{s_2}}s_2\|_2\leq \frac{1}{2}\|s_1-s_2\|_2$ holds true, meaning (\ref{contraction1}) is satisfied, with $\ds k=\frac{1}{2}$ and $N^{s_1}_{s_2}=N(x_1,x_2)$, where $x_1,x_2$ are the first coordinates of $s_1,s_2$, respectively.
	
	Consider $y\in (0,1]$, $r\in(0,1)$, and $x\in [0,r]$. Define\\
    $\ds p(x)=\left\{
    \begin{array}{rr} \ds\left\lceil\frac{2}{|f_y(x)-x|5 y \cos\left(\frac{\pi}{2}r\right)}\right\rceil& :|f_y(x)-x|>0\\
    1& :|f_y(x)-x|=0 \end{array} \right.$
    There are two scenarios.
    \begin{enumerate}[label=case-\arabic*.] 
    
    \item $|f_y(x)-x|>0$, it follows from (\ref{e3e3.1}) that the sequence $\{f_y^n(x)\}_{n=0}^\infty$ is decreasing. Consequently, using (\ref{e3e3}), for every $n\in \mathbb{N}$, the inequality \\
    $\ds 0\leq f_{y}^{p(x)+n}(x)\leq f_{y}^{p(x)}(x)\leq \frac{1}{p(x) 5 y\cos\left(\frac{\pi}{2}r\right)}\leq\frac{1}{2}|f_y(x)-x|$ holds. Additionally, from (\ref{e3e3.1}), we deduce that for every $n\in \mathbb{N}$, $|f_y(x)-x|\leq |f_y^n(x)-x|$. Thus, for every $n\in \mathbb{N}$, it holds that $\ds|f_{y}^{p(x)+n}(x)- f_{y}^{p(x)}(x)|\leq \frac{1}{2}|f_y^n(x)-x|$.
    
    \item $|f_y(x)-x|=0$, it is evident that $f_y^i(x)=x$ for every $i\in \mathbb{N}$. That is, \\ $\ds 0=|f_{y}^{p(x)+n}(x)- f_{y}^{p(x)}(x)|\leq \frac{1}{2}|f_y^n(x)-x|=0$ for every $n\in \mathbb{N}$.
    \end{enumerate}
    In both scenarios, we conclude that $\ds|f_{y}^{p(x)+n}(x)- f_{y}^{p(x)}(x)|\leq \frac{1}{2}|f_y^n(x)-x|$, for every $n\in \mathbb{N}$. Therefore, for any $s\in S^y_r$ and $n\in \mathbb{N}$, the inequality $\ds\|T^{N_s^s}s-T^{N_s^s+n}s\|_2\leq \frac{1}{2}\|s-T^ns\|_2$ holds, which means inequality (\ref{contraction2}) is satisfied, with $k=\frac{1}{2}$ and $N_s^s=p(x)$, where $x$ is the first coordinate of $s$.
    
    Consequently, for each $y\in (0,1]$ and $r\in (0,1)$, $T$ acts as a universal iterative contraction map on $S^y_r$. Therefore, $\mathcal{C}=\{S^y_r:y<0\leq1,0<r<1\}$ forms a closed cover of $U$, meeting the criteria outlined in statement \ref{t3st-1} of Theorem \ref{t3}. Consequently, for any $u\in U$, the sequence $\{T^nu\}_{n=0}^\infty$ converges to a fixed point of $T$ in $U$.
\end{proof}
	
	In the final example, we demonstrate how our findings can be applied to refute the convergence of all iterative sequences towards fixed points.
	
	\begin{ex}
		Consider the real interval $I=\ds\left[\frac{1}{2};1\right)$. Define the function $T:\mathbb{R}\to\mathbb{R}$ by \\
        $\ds Tx=x-2\frac{(x-1)^2}{2x-3}\sin\left(\frac{\pi}{1-x}\right)$. There exists an $\ds x\in I$ such that the sequence $\{T^nx\}_{n=0}^\infty$ does not converge to a fixed point of $T$ within $I$.
	\end{ex}
	\begin{proof}
		We can confirm that $\ds T(I)\subseteq I$. Conversely, the sequence $\{x_n\}_{n=1}^\infty$, defined by $\ds x_n=\frac{4n-1}{4n+1}$, meets the condition $\{x_n\}_{n=1}^\infty\subset I$ and satisfies $Tx_n=x_{n+1}$. Thus, if $U\subset I$, $T(U)=U$, and $x_1\in U$, then $\{x_n\}_{n=1}^\infty\subseteq U$. Given that $1\notin I$ and $1$ is a limit point of $\{x_n\}_{n=1}^\infty$, it is evident that there is no closed subset $S$ of $I$ such that $T(S)=S$ and $x_1\in S$. Consequently, there is no cover of $I$ that fulfills the condition from statement \ref{t3st-1} of Theorem \ref{t3}. Therefore, there exists some $\ds x\in I$ such that the sequence $\{T^nx\}_{n=0}^\infty$ does not converge to a fixed point of $T$ in $I$.
	\end{proof}
	
	\section*{Conclusion}
	
	In this study, we introduce a broadening of Banach's fixed-point theorem, grounded in the iterative contraction principle, which outlines the necessary and sufficient conditions for the presence of a unique fixed point to which all iterative sequences converge, along with an assessment of the related error. Additionally, we developed and demonstrated a theorem that specifies the necessary and sufficient conditions for the convergence of all iterative sequences to fixed points, which may not be unique. Finally, our findings are exemplified through four cases: one in which the fixed point is not immediately apparent and the error is explicitly calculated, another in which the usual generalizations of Banach's theorem do not apply, a third with multiple fixed points, and a fourth showing how our method can be employed to exclude convergence to a fixed point. These results highlight the wider applicability and accuracy of our generalization compared with traditional outcomes.
	
	\section*{Acknowledgments} %
	The author is partially financed by the European Union-NextGenerationEU through the National Recovery and Resilience Plan of the Republic of Bulgaria, project DUECOS BG-RRP-2.004-0001-C01.

	\bibliographystyle{plain}
	\bibliography{bibliography}
\end{document}